\documentclass[a4paper]{article}

\usepackage{arydshln}
\usepackage{amsmath}
\usepackage{amsfonts,amssymb}
\usepackage[compress]{cite}
\usepackage{graphicx}
\usepackage{mdwlist}
\usepackage{paralist}
\usepackage{url}

\newtheorem{lemma}{\sc Lemma}[section]
\newtheorem{theorem}[lemma]{\sc Theorem}

\newtheorem{remark}{\sc Remark}[section]
\newtheorem{assumption}{\sc Assumption}[section]
\newtheorem{definition}{\sc Definition}[section]

\newcommand{\jpfig}[4]{\begin{figure}[t] \centering \includegraphics[width=#1\linewidth]{#2} \caption{\label{#3}#4} \end{figure}}

\renewcommand{\Re}{\mathbb{R}}
\renewcommand{\matrix}[2]{\left[\begin{array}{#1} #2 \end{array}\right] }

\newcommand{\diag}{\text{diag}}

\newcommand{\comm}{\mathcal{C}}

\newcommand{\p}{\mathcal{P}}
\newcommand{\A}{\mathcal{A}}
\newcommand{\B}{\mathcal{B}}
\newcommand{\K}{\mathcal{K}}
\newcommand{\R}{\mathcal{R}}
\newcommand{\D}{\mathcal{D}}

\def\QED{~\rule[-1pt]{5pt}{5pt}\par\medskip}
\newenvironment{proof}{{\it Proof:\ }}{ \hfill \QED}

\DeclareMathOperator*{\argmin}{arg\,min}

\title{Decentralized Disturbance Accommodation with Limited Plant Model Information\thanks{The work of F.~Farokhi and K.~H.~Johansson were supported by grants from the Swedish Research Council and the Knut and Alice Wallenberg Foundation. The work of C.~Langbort was supported, in part, by the 2010 AFOSR MURI ``Multi-Layer and Multi-Resolution Networks of Interacting Agents in Adversarial Environments''.}}

\author{Farhad~Farokhi\thanks{ACCESS Linnaeus Center, School of Electrical Engineering, KTH Royal Institute of Technology, SE-100 44 Stockholm, Sweden. E-mails: \{farokhi,kallej\}@ee.kth.se }, C\'{e}dric~Langbort\thanks{Department of Aerospace Engineering, University of Illinois at Urbana-Champaign, Illinois, USA. E-mail: langbort@illinois.edu}, Karl~H.~Johansson$^\dag$ }

\begin{document}

\maketitle

\begin{abstract}
The design of optimal disturbance accommodation and servomechanism controllers with limited plant model information is considered in this paper. Their closed-loop performance are compared using a performance metric called competitive ratio which is the worst-case ratio of the cost of a given control design strategy to the cost of the optimal control design with full model information. It was recently shown that when it comes to designing optimal centralized or partially structured decentralized state-feedback controllers with limited model information, the best control design strategy in terms of competitive ratio is a static one. This is true even though the optimal structured decentralized state-feedback controller with full model information is dynamic. In this paper, we show that, in contrast, the best limited model information control design strategy for the disturbance accommodation problem gives a dynamic controller. We find an explicit minimizer of the competitive ratio and we show that it is undominated, that is, there is no other control design strategy that performs better for all possible plants while having the same worst-case ratio. This optimal controller can be separated into a static feedback law and a dynamic disturbance observer. For constant disturbances, it is shown that this structure corresponds to proportional-integral control.
\end{abstract}

\section{Introduction}
Recent advances in networked control systems have created new opportunities and challenges in controlling large-scale systems composed of several interacting subsystems. An example of a networked control system is shown in Figure~\ref{FigureNCS} where $P_i$ denotes the subsystems to be controlled and $C_i$ denotes the controllers. The interactions between the subsystems and the controllers as well as the external disturbances and references are indicated by arrows. For such networked systems, many researchers have considered the problem of decentralized or distributed stabilization or optimal control as well as the effect of communication channel limitations on closed-loop performance~\cite{Wang1973,Ozguler1990,Sandell1978,Mahajan2009123,Witsenhausen1968,Papadimitriou1986,Rotkowitz2006,Swigart2010,Shah2010,Voulgaris200351}. However, at the heart of all these methods lies the (sometimes implicit) assumption that the designer has access to the global plant model information when designing a local controller. This assumption might not be warranted, however, in some applications of interest~\cite{Dunbar2007,Negenborn2010}, in which the designer is constrained to compute local controllers for a large-scale systems in a distributed manner with access to only a limited or partial model of the plant. This might be due to several reasons, for example, \begin{inparaenum}[\upshape(\itshape i\upshape)] \item \label{case:3} the designer wants the parameters of each local controller to only depend on local model information, so that the controllers do not need to be modified if the model parameters of a particular subsystem, which is not directly connected to them, change, \item \label{case:2} the design of each local controller is done by a designer with no access to the global model of plant since at the time of design the complete plant model information is not available or might change later in the design process, or \item \label{case:1} different subsystems belong to different individuals who refuse to share their model information since they consider it private. \end{inparaenum} These situations are very common in practice. For instance, a chemical plant in process industry can have thousands of proportional-integral-derivative controllers. These processes well illustrate Case~\upshape(\itshape\ref{case:3}\upshape), as the tuning of each local controller does not typically require model information from other control loops in order to simplify the maintenance and limit the controller complexity. Case~\upshape(\itshape\ref{case:2}\upshape) is typical for cooperative driving such as vehicle platooning, where each vehicle has its own local (cruise) controller which cannot be designed based on model information of all possible vehicles that it may cooperate with in future traffic scenarios. Case~\upshape(\itshape\ref{case:1}\upshape) can be also illustrated by the control of the power grid, where economic incentives might limit the exchange of network model information across regional borders. Therefore, we have started investigating the concept of limited model information control design for large-scale systems~\cite{Langbort2010,Farokhi_ACC_2011,FarokhiLangbortJohansson2011,Farokhi2011}.

Control design strategies, mappings from the set of plants of interest to the set of applicable controllers, with various degrees of model information are compared using the competitive ratio as a performance metric, that is, the worst-case ratio of the cost of a given control design strategy to the cost of the optimal control design with full model information. In control design with limited plant model information, we search for the ``best'' control design strategy which attains the minimum competitive ratio among all limited model information design strategies. As this minimizer might not be unique, we further want to find an undominated minimizer of the competitive ratio, that is, there is no other control design strategy in the set of all limited model information design strategies with a better closed-loop performance for all possible plants while maintaining the same worst-case ratio. Recent attention has been on limited model information design methods that produce centralized or decentralized static state-feedback controllers with specific structure. This was justified, at first, by being the simplest case to explore~\cite{Langbort2010,Farokhi_ACC_2011,FarokhiLangbortJohansson2011}, and then, maybe more surprisingly, by the recently proven fact that the ``best'' (in the sense of competitive ratio and domination) state-feedback structured $\textsc{H}_2$- controller for a plant with lower triangular information pattern that can be designed with limited model information is also static~\cite{Farokhi2011}, even though the best such controller constructed with access to full model information is dynamic~\cite{Swigart2010,Shah2010}. In this paper, we study the problem of limited model information control design for optimal disturbance accommodation and servomechanism, and show that, contrary to the situations mentioned above, the ``best'' limited model information design method gives dynamic controllers. Optimal disturbance accommodation is a meaningful model for problems such as constant disturbance rejection or step reference tracking, and has been well-studied in the literature~\cite{Smith1972,Davison1971,Johnson1968,Young1972,Anderson1971}, but with no attention being paid to the model information limitations in the design procedure.

In this paper, specifically, we consider limited model information control design for interconnection of scalar discrete-time linear time-invariant subsystems being affected by scalar decoupled disturbances with a quadratic separable performance criterion. The choice of such a separable cost function is motivated first by the servomechanism and disturbance accommodation literature~\cite{Smith1972,Davison1971,Johnson1968,Young1972,Anderson1971}, and second by our interest in dynamically-coupled but cost-decoupled plants and their applications in supply chains and shared infrastructure~\cite{Dunbar2007,Negenborn2010} which has been shown to be well-modeled in this fashion. The assumptions on scalar subsystems and scalar disturbances are technical assumptions to make the algebra in the proofs shorter. Since we want each subsystem to be directly controllable (so that designing subcontrollers based on only local model information is possible), we assume that the overall system is fully-actuated.

We start with the case that each subcontroller is only designed with the corresponding subsystem model information. We prove that, in the case where the plant graph contains no sink and the control graph is a supergraph of the plant graph, the so-called dynamic deadbeat control design strategy is an undominated minimizer of the competitive ratio. For any fixed plant, the controller given by the deadbeat control design strategy can be separated into a static feedback law and a dynamic disturbance observer. For constant disturbances, it is shown that this structure corresponds to a proportional-integral controller. However, the deadbeat control design strategy is dominated when the plant graph has sinks. We present an undominated limited model information control design method that takes advantage of the knowledge of the sinks' location to achieve a better closed-loop performance. We further show that this control design strategy has the same competitive ratio as the deadbeat control design strategy. Later, we characterize the amount of model information needed to achieve a better competitive ratio than the deadbeat control design strategy. The amount of information is captured using the design graph, that is, a directed graph which indicates the dependency of each subcontroller on different parts of the global dynamical model. It turns out that, to achieve a better competitive ratio than the deadbeat control design strategy, each subsystem's controller should, at least, has access to the model of all those subsystems that can affect it.

\jpfig{0.7}{FigureNCS}{FigureNCS}{ Illustrative example of a networked control system. }

This paper is organized as follows. We formulate the problem and define the performance metric in Section~\ref{sec_2}. In Section~\ref{sec_2_1}, we introduce two specific control design strategies and study their properties. We characterize the best limited model information control design method as a function of the subsystems interconnection pattern in Section~\ref{sec:Gp}. In Section~\ref{sec:Gc}, we study the influence of the amount of the information available to each subsystem on the quality of the controllers that they can produce. We discuss special cases of constant-disturbance rejection, step-reference tracking, and proportional-integral control in Section~\ref{sec_3.5}. Finally, we end with conclusions in Section~\ref{sec_4}.

\subsection{Notation}
The set of real numbers and complex numbers are denoted by $\mathbb{R}$ and $\mathbb{C}$, respectively. All other sets are denoted by calligraphic letters, such as $\p$ and $\A$. Particularly, the letter $\mathcal{R}$ denotes the set of proper real rational functions.

Matrices are denoted by capital roman letters such as $A$. $A_j$ will denote the $j^{\textrm{\scriptsize{th}}}$ row of $A$. $A_{ij}$ denotes a submatrix of matrix $A$, the dimension and the position of which will be defined in the text. The entry in the $i^{\textrm{\scriptsize{th}}}$ row and the $j^{\textrm{\scriptsize{th}}}$ column of the matrix $A$ is $a_{ij}$.

Let $\mathcal{S}_{++}^n$ ($\mathcal{S}_{+}^n$) be the set of symmetric positive definite (positive semidefinite) matrices in $\Re^{n\times n}$. $A > (\geq) 0$ means that the symmetric matrix $A\in \Re^{n\times n}$ is positive definite (positive semidefinite) and $A > (\geq) B$ means that $A-B > (\geq) 0$.

$\underline{\sigma}(Y)$ and $\overline{\sigma}(Y)$ denote the smallest and the largest singular values of the matrix $Y$, respectively. Vector $e_i$ denotes the column-vector with all entries zero except the $i^{\textrm{\scriptsize{th}}}$ entry, which is equal to one.

All graphs considered in this paper are directed, possibly with self-loops, with vertex set $\{1,...,q\}$ for some positive integer $q$. If $G=(\{1,...,q\},E)$ is a directed graph, we say that $i$ is a sink if there does not exist $j \neq i $ such that $(i,j) \in E$. The adjacency matrix $S\in\{0,1\}^{q\times q}$ of graph $G$ is a matrix whose entries are defined as $s_{ij}=1$ if $(j,i) \in E$ and $s_{ij}=0$ otherwise. Since the set of vertices is fixed for all considered graphs, a subgraph of a graph $G$ is a graph whose edge set is a subset of the edge set of $G$ and a supergraph of a graph $G$ is a graph of which $G$ is a subgraph. We use the notation $G'\supseteq G$ to indicate that $G'$ is a supergraph of $G$.

\section{Mathematical Formulation} \label{sec_2}
\subsection{Plant Model}
We are interested in discrete-time linear time-invariant dynamical systems described by
\begin{equation} \label{eqn_1}
x(k+1)=Ax(k)+B(u(k)+w(k)) \; ; \; x(0)=x_0,
\end{equation}
where $x(k)\in \mathbb{R}^{n}$ is the state vector, $u(k)\in \mathbb{R}^{n}$ is the control input, $w(k)\in \mathbb{R}^{n}$ is the disturbance vector and $A\in \mathbb{R}^{n\times n}$ and $B\in \mathbb{R}^{n\times n}$ are appropriate model matrices. Furthermore, we assume that the dynamic disturbance can be modeled as
\begin{equation} \label{eqn_2}
w(k+1)=Dw(k) \; ; \; w(0)=w_0,
\end{equation}
where $w_0\in\mathbb{R}^n$ is unknown to the controller (and the control designer). Let a plant graph $G_{\p}$ with adjacency matrix $S_{\p}$ be given. We define the following set of matrices
$$
\A(S_{\p})=\{ \bar{A} \in \mathbb{R}^{n \times n} \; | \; \bar{a}_{ij} = 0  \mbox{ for all } 1\leq i,j \leq n \mbox { such that } (s_{\p})_{ij}=0 \}.
$$
Also, let us define
$$
\B(\epsilon) =\{ \bar{B} \in \mathbb{R}^{n \times n} \; | \; \underline{\sigma}(\bar{B}) \geq \epsilon, \bar{b}_{ij} = 0 \mbox{ for all } 1 \leq i \neq j \leq n \},
$$
for some given scalar $\epsilon >0$ and
$$
\D=\{ \bar{D} \in \mathbb{R}^{n \times n} \; | \; \bar{d}_{ij} = 0 \mbox{ for all } 1 \leq i \neq j \leq n \}.
$$
Now, we can introduce the set of plants of interest $\p$ as the set of all discrete-time linear time-invariant systems~(\ref{eqn_1})--(\ref{eqn_2}) with $A \in \A(S_{\p})$, $B \in \B(\epsilon)$, $D \in \D$, $x_0 \in \mathbb{R}^n$ and $w_0 \in \mathbb{R}^n$. With a slight abuse of notation, we will henceforth identify a plant $P\in\p$ with its corresponding tuple $(A,B,D,x_0,w_0)$.

The variables $x_i\in\mathbb{R}$, $u_i\in\mathbb{R}$, and $w_i\in\mathbb{R}$ are the state, input, and disturbance of scalar subsystem $i$ whose dynamics are given by
\begin{eqnarray}
x_i(k+1) &=& \sum_{j=1}^n a_{ij} x_j(k) + b_{ii} (u_i(k)+w_i(k)), \nonumber \\
w_i(k+1) &=& d_{ii} w_i(k). \nonumber
\end{eqnarray}
We call $G_\p$ the plant graph since it illustrates the interconnection structure between different subsystems, that is, subsystem $j$ can affect subsystem $i$ only if $(j,i) \in E_{\p}$. Note that we assume that the global system is fully-actuated; i.e., all the matrices $B\in\B(\epsilon)$ are square invertible matrices. This assumption is motivated by the fact that we need all subsystems to be directly controllable. Moreover, we make the standing assumption that the plant graph $G_\p$ contain no isolated node. There is no loss of generality in assuming that there is no isolated node in the plant graph $G_{\mathcal{P}}$, since it is always possible to design a controller for an isolated subsystem without any model information about the other subsystems and without influencing the overall system performance. Note that, in particular, this implies that there are $q \geq 2$ vertices in the graph because for $q=1$ the only subsystem that exists is an isolated node in the plant graph.

Figure~\ref{figure0a}($a$) shows an example of a plant graph $G_\mathcal{P}$. Each node represents a subsystem of the system. For instance, the second subsystem in this example affects the first subsystem and the third subsystem, that is, submatrices $A_{12}$ and $A_{32}$ can be nonzero. Note that the first subsystem in Figure~\ref{figure0a}($a$) represents a sink of $G_\p$. The plant graph $G'_\p$ in Figure~\ref{figure0a}($a'$) has no sink.

\jpfig{0.62}{FigureGraph4}{figure0a}{ $G_\p$ and $G'_\p$ are examples of plant graphs, $G_\K$ and $G'_\K$ are examples of control graphs, and $G_\comm$ and $G'_\comm$ are examples of design graphs.}

\subsection{Controller Model} \label{subsec:controller}
The control laws of interest in this paper are discrete-time linear time-invariant dynamic state-feedback control laws of the form
\begin{eqnarray} \label{eqn:controller}
x_K(k+1)&=&A_K x_K(k) + B_K x(k) \; ; \; x_K(0)=0, \\
u(k)&=&C_K x_K(k)+D_K x(k).
\end{eqnarray}
Each controller can also be represented by a transfer function
$$
K\triangleq\left[\begin{array}{c|c} A_K & B_K \\ \hline C_K & D_K \end{array}\right]=C_K(zI-A_K)^{-1}B_K+D_K,
$$
where $z$ is the symbol for the one time-step forward shift operator. Let a control graph $G_{\K}$ with adjacency matrix $S_{\K}$ be given. Each controller $K$ belongs to
$$
\K(S_{\K})=\{ K \in  \; \R^{n \times n} \;|\; k_{ij} = 0 \mbox{ for all }  1\leq i,j \leq n  \mbox { such that } (s_{\K})_{ij}=0 \}.
$$
When the adjacency matrix $S_{\K}$ is not relevant or can be deduced from context, we refer to the set of controllers as $\K$. Since it makes sense for each subcontroller to use at least its corresponding subsystem state-measurements, we make the standing assumption that in each design graph $G_\K$, all the self-loops are present.

An example of a control graph $G_\K$ is given in Figure~\ref{figure0a}($b$). Each node represents a subsystem--controller pair of the overall system. For instance, $G_\K$ shows that the first subcontroller can use state measurements of the second subsystem beside its corresponding subsystem state-measurements. Figure~\ref{figure0a}($b'$) shows a complete control graph $G'_{\mathcal{K}}$. This control graph indicates that each subcontroller has access to full state measurements of all subsystems, that is, $\mathcal{K}(S_{\mathcal{K}})=\R^{n \times n}$.

\subsection{Control Design Methods}
A control design method $\Gamma$ is a map from the set of plants $\p$ to the set of controllers $\K$. Any control design method $\Gamma$ has the form
\begin{equation}
\Gamma=\matrix{ccc}{ \gamma_{11} & \cdots & \gamma_{1n} \\ \vdots & \ddots & \vdots \\ \gamma_{n1} & \cdots & \gamma_{nn} },
\label{eq_gamma}
\end{equation}
where each entry $\gamma_{ij}$ represents a map $\A(S_{\p}) \times \B(\epsilon) \times \D  \rightarrow \R$.

Let a design graph $G_{\mathcal{C}}$ with adjacency matrix $S_{\mathcal{C}}$ be given. We say that $\Gamma$ has structure $G_{\mathcal{C}}$, if for all $i$, subcontroller $i$ is computed with knowledge of the plant model of only those subsystems $j$ such that $(j,i) \in E_{\mathcal{C}}$. Equivalently, $\Gamma$ has structure $G_\comm$, if for all $i$, the map $\Gamma_i= [\gamma_{i1}\; \cdots\; \gamma_{in}]$ is only a function of $\{ [a_{j1}\;\cdots\;a_{jn}] , b_{jj}, d_{jj} \; | \; (s_{\mathcal{C}})_{ij} \neq 0\}$. When $G_{\mathcal{C}}$ is not a complete graph, we refer to $\Gamma \in \mathcal{C}$ as being a ``limited model information control design method''. Since it makes sense for the designer of each subcontroller to have access to at least its corresponding subsystem model parameters, we make the standing assumption that in each design graph $G_\comm$, all the self-loops are present.

\par The set of all control design strategies with structure $G_{\mathcal{C}}$ will be denoted by $\mathcal{C}$, which is considered as a subset of all maps from $\A(S_{\p}) \times \B(\epsilon) \times \D$ to $\K(S_{\K})$ because a design method with structure $G_{\mathcal{C}}$ is not a function of the initial state $x_0$ or the initial disturbance $w_0$. We use the notation $\Gamma(A,B,D)$ instead of $\Gamma(P)$ for each plant $P=(A,B,D,x_0,w_0) \in \p$ to emphasize this fact.

To simplify the notation, we assume that any control design strategy $\Gamma$ has a state-space realization of the form
$$
\Gamma(A,B,D)=\left[\begin{array}{c|c} A_\Gamma(A,B,D) & B_\Gamma(A,B,D) \\ \hline C_\Gamma(A,B,D) & D_\Gamma(A,B,D) \end{array}\right],
$$
where $A_\Gamma(A,B,D)$, $B_\Gamma(A,B,D)$, $C_\Gamma(A,B,D)$, and $D_\Gamma(A,B,D)$ are matrices of appropriate dimension for each plant $P=(A,B,D,x_0,w_0)\in \p$. The matrices \linebreak[4] $A_\Gamma(A,B,D)$ and $C_\Gamma(A,B,D)$ are block diagonal matrices since subcontrollers do not share state variables. This realization is not necessarily minimal.

An example of a design graph $G_{\mathcal{C}}$ is given in Figure~\ref{figure0a}($c$). Each node represents a subsystem--controller pair of the overall system. For instance, $G_\comm$ shows that the second subsystem's model is available to the designer of the first subsystem's controller but not the third and the forth subsystems' model. Figure~\ref{figure0a}($c'$) shows a fully disconnected design graph $G'_\comm$. A local designer in this case can only rely on the model of its corresponding subsystem.

\subsection{Performance Metric}
The goal of this paper is to investigate the influence of the plant graph on the properties of controllers derived from limited model information control design methods. We use two performance metrics to compare different control design methods, which are adapted from the notions of competitive ratio and domination recently introduced in~\cite{Langbort2010,Farokhi_ACC_2011,FarokhiLangbortJohansson2011,Farokhi2011}. Let us start with introducing the closed-loop performance criterion.

\par To each plant $P=(A,B,D,x_0,w_0) \in \p$ and controller $K\in \K$, we associate the performance criterion
\begin{equation} \label{eqn:1}
J_P (K)=\sum_{k=0}^\infty \left[ x(k)^TQx(k) + (u(k)+w(k))^TR(u(k)+w(k)) \right],
\end{equation}
where $Q \in \mathcal{S}_{++}^n$ and $R \in \mathcal{S}_{++}^n$ are diagonal matrices. We make the following standing assumption:

\begin{assumption} $Q=R=I$. \end{assumption}

This is without loss of generality because the change of variables $(\bar{x},\bar{u},\bar{w})= (Q^{1/2} x,R^{1/2}u,R^{1/2}w)$ transforms the closed-loop performance measure and state-space representation
into
\begin{equation}
\label{cost_easy}
J_P (K)=\sum_{k=0}^\infty \left[ \bar{x}(k)^T\bar{x}(k) + (\bar{u}(k)+\bar{w}(k))^T(\bar{u}(k)+\bar{w}(k)) \right],
\end{equation}
and
\begin{eqnarray}
\bar{x}(k+1)&=&Q^{1/2}AQ^{-1/2}\bar{x}(k)+Q^{1/2}BR^{-1/2}(\bar{u}(k)+\bar{w}(k))\nonumber \\ &=&\bar{A}\bar{x}(k)+\bar{B}(\bar{u}(k)+\bar{w}(k)),\nonumber
\end{eqnarray}
without affecting the plant, control, or design graphs, due to $Q$ and $R$ being diagonal matrices.

\begin{definition}\emph{(Competitive Ratio)} Let a plant graph $G_{\p}$, a control graph $G_{\K}$, and a constant $\epsilon > 0$ be given. Assume that, for every plant $P \in \p$, there exists an optimal controller $K^*(P) \in \K$ such that
$$
J_P (K^*(P))\leq J_P (K), \; \forall K \in \K.
$$
The competitive ratio of a control design method $\Gamma$ is defined as
$$
r_{\p} (\Gamma)= \sup_{P=(A,B,D,x_0,w_0) \in \p} \frac{J_P (\Gamma(A,B,D))}{J_P (K^*(P))},
$$
with the convention that ``$\frac{0}{0}$'' equals one.
\label{def_comp_rat}
\end{definition}

Note that the optimal control design strategy (with full plant model information) $K^*$ does not necessarily belong to the set $\comm$.

\begin{definition}\emph{(Domination)} A control design method $\Gamma$ is said to dominate another control design method $\Gamma'$ if
\begin{equation}
J_P(\Gamma(A,B,D))\leq J_P(\Gamma'(A,B,D)),\hspace{0.1in} \forall \; P=(A,B,D,x_0,w_0)\in \p,
\label{comp}
\end{equation}
with strict inequality holding for at least one plant in $\p$. When $\Gamma' \in \mathcal{C}$ and no control design method $\Gamma \in \mathcal{C}$ exists that satisfies (\ref{comp}), we say that $\Gamma'$ is undominated in $\mathcal{C}$ for plants in $\p$.
\end{definition}

In the remainder of this paper, we determine optimal control design strategies
\begin{equation} \label{eqn:0}
\Gamma^*\in\argmin_{\Gamma \in \comm} r_{\p} (\Gamma),
\end{equation}
for a given plant, control, and design graph. Since several design methods may achieve this minimum, we are interested in determining which ones of these strategies are undominated.

\section{Preliminary Results} \label{sec_2_1}
Before stating the main results of the paper, we introduce two specific control design strategies and study their properties.

\subsection{Optimal Centralized Control Design Strategy} \label{subsec_OCCDS}
The problem of designing optimal constant input-disturbance accommodation control for linear time-invariant continuous-time systems was solved earlier in~\cite{Anderson1971,Johnson1968}. To the best of our knowledge, this was not the case for arbitrary dynamic disturbance accommodation when dealing with linear time-invariant discrete-time systems. As we need it later, we start by developing the optimal centralized (i.e, $G_\K$ is a complete graph) disturbance accommodation controller $K^*(P)$ for a given plant $P\in \p$. First, let us define the auxiliary variables $\xi(k)=u(k)+w(k)$ and $\bar{u}(k)=u(k+1)-Du(k)$. It then follows that
\begin{eqnarray}
\xi(k+1)&=&u(k+1)+w(k+1) \nonumber \\&=&u(k+1)+D w(k) \nonumber \\&=&Du(k)+D w(k)+\bar{u}(k) \nonumber \\&=&D\xi(k)+\bar{u}(k). \label{eqn:xi}
\end{eqnarray}
Augmenting the state-transition in~(\ref{eqn:xi}) with the state-space representation of the system in~(\ref{eqn_1}) results in
\begin{eqnarray} \label{eqn:new_plant}
\matrix{c}{x(k+1)\\ \xi(k+1)}=\matrix{cc}{A & B \\0 & D}\matrix{c}{x(k)\\ \xi(k)}+\matrix{c}{0\\ I}\bar{u}(k).
\end{eqnarray}
Besides, we can write the performance measure in~(\ref{cost_easy}) as
\begin{eqnarray} \label{eqn:new_cost}
J_P (K)=\sum_{k=0}^\infty \matrix{c}{x(k)\\ \xi(k)}^T\matrix{c}{x(k)\\ \xi(k)}.
\end{eqnarray}
To guarantee the existence and uniqueness of the optimal controller $K^*(P)$, we need the following lemma.

\begin{lemma} \label{prop:0} The pair $(\tilde{A},\tilde{B})$, with
\begin{equation} \label{ABtilde}
\tilde{A}=\matrix{cc}{A & B \\0 & D},\; \tilde{B}=\matrix{c}{0\\ I},
\end{equation}
is controllable for any given $P=(A,B,D,x_0,w_0)\in\p$. \end{lemma}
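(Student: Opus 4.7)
The plan is to apply the Popov–Belevitch–Hautus (PBH) eigenvalue test for controllability: the pair $(\tilde A,\tilde B)$ is controllable if and only if the matrix
$$[\tilde A - \lambda I,\ \tilde B] = \matrix{ccc}{A-\lambda I & B & 0 \\ 0 & D-\lambda I & I}$$
has full row rank $2n$ for every $\lambda\in\mathbb{C}$. The block-triangular structure of $\tilde{A}$ together with the very special shape of $\tilde{B}$ (only an $I$ in the lower block) makes this essentially a one-line computation, so I would organize the proof around it.

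The argument proceeds by taking an arbitrary left annihilator $[\alpha^{T},\beta^{T}]$ with $\alpha,\beta\in\mathbb{C}^{n}$. The third block column immediately forces $\beta = 0$; feeding this back into the second block column leaves $\alpha^{T}B = 0$. Since $B\in\B(\epsilon)$ has $\underline{\sigma}(B)\geq \epsilon>0$ and is therefore invertible, $\alpha = 0$ follows. This holds for every $\lambda$, so PBH yields controllability. Note that the first block column is never even consulted, which reflects the fact that the input enters every coordinate of the disturbance state and the disturbance state then drives every coordinate of $x$ through the invertible matrix $B$.

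If one prefers a direct rank computation of the controllability matrix, the same idea works: a short calculation gives $\tilde A\tilde B = [B^{T},D^{T}]^{T}$, hence $\tilde A\tilde B - \tilde B D = [B^{T},0]^{T}$, so the column span of the first two block columns $[\tilde B,\ \tilde A\tilde B]$ already contains $\begin{bmatrix}0\\ I\end{bmatrix}$ and $\begin{bmatrix}B\\ 0\end{bmatrix}$, and invertibility of $B$ again closes the argument. This version is perhaps a bit more concrete but does not save any effort.

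I do not expect a real obstacle. The proposition is essentially a structural observation: the invertibility of $B$, which is built into the definition of $\B(\epsilon)$ and motivated in the paper by the fully-actuated assumption, is exactly what is needed to propagate controllability from the $\xi$-block to the $x$-block. The only mild point of care is to run PBH over complex $\lambda$, but since the conclusion $\beta=0,\ \alpha=0$ is independent of $\lambda$, this is immaterial.
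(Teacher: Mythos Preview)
Your proof is correct and follows essentially the same route as the paper: both apply the PBH rank test to $[\tilde A-\lambda I,\ \tilde B]$ and observe that the identity block in $\tilde B$ together with the invertibility of $B\in\B(\epsilon)$ forces full row rank for every $\lambda\in\mathbb{C}$. Your left-annihilator phrasing and the optional controllability-matrix variant are minor stylistic additions, but the core argument is identical.
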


\begin{proof} The pair $(\tilde{A},\tilde{B})$ is controllable if and only if
$$
\matrix{c;{2pt/2pt}c}{\tilde{A}-\lambda I & \tilde{B}}=\matrix{cc;{2pt/2pt}c}{A-\lambda I & B & 0 \\0 & D-\lambda I & I }
$$
is full-rank for all $\lambda\in \mathbb{C}$. This condition is always satisfied since all matrices $B\in\B(\epsilon)$ are full-rank matrices.
\end{proof}

Now the problem of minimizing the cost function in~(\ref{eqn:new_cost}) subject to plant dynamics in~(\ref{eqn:new_plant}) becomes a state-feedback linear quadratic optimal control with a unique solution of the form
$$
\bar{u}(k)=G_1 x(k) + G_2 \xi(k),
$$
where $G_1\in \mathbb{R}^{n\times n}$ and $G_2 \in \mathbb{R}^{n\times n}$ satisfy
\begin{equation} \label{eqn:G}
\matrix{cc}{G_1 & G_2}=-(\tilde{B}^TX\tilde{B})^{-1}\tilde{B}^TX\tilde{A}
\end{equation}
and $X$ is the unique positive-definite solution of the discrete algebraic Riccati equation
\begin{equation} \label{eqn_Riccati}
\tilde{A}^TX\tilde{B}(\tilde{B}^TX\tilde{B})^{-1}\tilde{B}^TX\tilde{A}-\tilde{A}^TX\tilde{A}+X-I=0.
\end{equation}
Therefore, we have
\begin{eqnarray}
u(k+1)&=&Du(k)+\bar{u}(k) \nonumber \\&=&Du(k)+G_1 x(k)+G_2 \xi(k). \label{eqn:uk+11}
\end{eqnarray}
Using the identity $\xi(k)=B^{-1}(x(k+1)-Ax(k))$ in~(\ref{eqn:uk+11}), we get
\begin{eqnarray}\label{eqn:uk+1}
u(k+1)&=&Du(k)+G_1 x(k)+G_2 \xi(k) \nonumber \\&=&Du(k)+G_1 x(k)+G_2 B^{-1}(x(k+1)-Ax(k)) \nonumber \\&=&Du(k)+(G_1-G_2B^{-1}A) x(k)+G_2 B^{-1}x(k+1).
\end{eqnarray}
Putting a control signal of the form $u(k)=x_K(k)+D_K x(k)$ in~(\ref{eqn:uk+1}), we get
\begin{eqnarray}
x_K(k+1)=Dx_K(k)+(DD_K+G_1-G_2 B^{-1}A)x(k)+(G_2 B^{-1}-D_K)x(k+1). \nonumber
\end{eqnarray}
Now, we enforce the condition $G_2 B^{-1}-D_K=0$, as $x_K(k+1)$ can only be a function of $x(k)$ and $x_K(k)$, see~(\ref{eqn:controller}). Therefore, the optimal controller $K^*(P)$ becomes
\begin{eqnarray}
x_K(k+1)&=&Dx_K(k)+[G_1 +DG_2 B^{-1}-G_2 B^{-1}A]x(k), \nonumber \\
u(k)&=&x_K(k)+G_2 B^{-1} x(k), \nonumber
\end{eqnarray}
with $x_K(0)=0$.

\begin{lemma} \label{lem:1}
Let the control graph $G_{\K}$ be a complete graph. Then, the cost of the optimal controller $K^*(P)$ for each plant $P\in \p$ is lower-bounded as
$$
J_P(K^*(P))\geq \matrix{c}{x_0 \\ Bw_0}^T \matrix{cc}{W+DWD+D^2B^{-2} & -D(W+B^{-2}) \\ -(W+B^{-2})D & W+B^{-2} } \matrix{c}{x_0 \\ Bw_0},
$$
where
$$
W=A^T(I+B^2)^{-1}A+I.
$$
\end{lemma}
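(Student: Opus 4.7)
The plan is to compute $J_P(K^*(P))$ in closed form in terms of the Riccati solution $X$ and then lower-bound $X$ by matrices involving $W$. Since the controller $K^*(P)$ derived above realizes the LQR-optimal feedback $\bar{u}(k) = G_1 x(k) + G_2 \xi(k)$ for all $k\geq 0$, and its first output is $u(0) = G_2 B^{-1} x_0$ (because $x_K(0)=0$), the LQR value-function identity immediately yields
$$
J_P(K^*(P)) = \begin{bmatrix} x_0 \\ \xi(0) \end{bmatrix}^T X \begin{bmatrix} x_0 \\ \xi(0) \end{bmatrix}, \qquad \xi(0) = G_2 B^{-1} x_0 + w_0.
$$

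First I would expose the block structure of $X$. Partitioning $X=\begin{bmatrix}X_{11}&X_{12}\\X_{12}^T&X_{22}\end{bmatrix}$ and substituting into the DARE~(\ref{eqn_Riccati}), the block equations simplify --- thanks to $\tilde{B}=[0;I]$ --- to the clean identities
$$
X_{11} = A^T \hat{X} A + I,\quad X_{12} = A^T \hat{X} B,\quad X_{22} = B\hat{X}B + I,
$$
where $\hat{X}:=X_{11}-X_{12}X_{22}^{-1}X_{12}^T$ is the Schur complement of $X_{22}$ in $X$. Completing the square in $\xi(0)$ and plugging in $G_2 = -X_{22}^{-1}(X_{12}^T B + X_{22} D)$ from~(\ref{eqn:G}), the linear-in-$w_0$ and linear-in-$x_0$ contributions realign so that $\xi(0)+X_{22}^{-1}X_{12}^T x_0 = -B^{-1}(Dx_0 - Bw_0)$. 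Using the additional identity $B^{-1}X_{22}B^{-1}=\hat{X}+B^{-2}$, this yields
$$
J_P(K^*(P)) = x_0^T \hat{X} x_0 + (Dx_0-Bw_0)^T \bigl(\hat{X}+B^{-2}\bigr)(Dx_0-Bw_0).
$$

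The second step is the key lower bound $\hat{X}\geq W$. Feeding the three block identities back into the definition of $\hat{X}$ and applying the matrix inversion lemma shows that $\hat{X}$ satisfies the standard LQR Riccati equation $\hat{X} = I + A^T(\hat{X}^{-1}+B^2)^{-1} A$. This forces $\hat{X}\geq I$, hence by monotonicity of matrix inversion $(\hat{X}^{-1}+B^2)^{-1}\geq (I+B^2)^{-1}$, so
$$
\hat{X}\geq I + A^T(I+B^2)^{-1}A = W, \qquad \hat{X}+B^{-2}\geq W+B^{-2}.
$$
Combining these with the closed-form expression above, and noting --- by a direct expansion that exploits the fact that the diagonal matrices $B$ and $D$ commute --- that the claimed right-hand side equals $x_0^T W x_0 + (Dx_0-Bw_0)^T(W+B^{-2})(Dx_0-Bw_0)$, the lemma follows.

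The main obstacle will be the bookkeeping in the first step: verifying that the block equations of the DARE actually collapse to the Schur-complement formulas listed, and that the cross term produced by completing the square realigns exactly into the combination $Dx_0 - Bw_0$. Once these two algebraic identities are confirmed, the reduction of $\hat{X}$ to a standard LQR Riccati solution via the matrix inversion lemma and the monotone bound $\hat{X} \geq W$ are essentially immediate.
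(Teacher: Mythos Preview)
Your argument is correct, and its overall architecture matches the paper's: both compute $J_P(K^*(P))$ as a quadratic form in $[x_0;\xi(0)]$ with matrix $X$, complete the square to rewrite it in terms of the Schur complement $\hat{X}=X_{11}-X_{12}X_{22}^{-1}X_{12}^T$ (the paper does the equivalent step via~(\ref{eqn:subRiccati}), since $X_{22}=B\hat{X}B+I$), and then reduce the lemma to the single matrix inequality $\hat{X}\ge W$, equivalently $X_{22}\ge BWB+I$.

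Where you genuinely diverge is in how you obtain that inequality. The paper regularizes the cheap-control problem with a weight $\rho>0$ on $\bar{u}$, invokes the Komaroff lower bound for $X(\rho)$, and then appeals to a limiting result as $\rho\to 0$ to conclude $X_{22}\ge BWB+I$. You instead observe directly from the block DARE that $\hat{X}$ itself solves the \emph{standard} discrete Riccati equation $\hat{X}=I+A^T(\hat{X}^{-1}+B^2)^{-1}A$, whence $\hat{X}\ge I$ and one more application of the fixed-point identity together with operator monotonicity of the inverse gives $\hat{X}\ge W$. This is a cleaner and fully self-contained route: it dispenses with the regularization, the external matrix-bound reference, and the limit argument, at the modest price of checking the three block identities $X_{11}=A^T\hat{X}A+I$, $X_{12}=A^T\hat{X}B$, $X_{22}=B\hat{X}B+I$ (which, as you note, fall out immediately from $\tilde{B}=[0;I]$ since $X-X\tilde{B}(\tilde{B}^TX\tilde{B})^{-1}\tilde{B}^TX=\mathrm{diag}(\hat{X},0)$). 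The paper's approach, on the other hand, would generalize more readily to settings where the reduced Riccati equation for $\hat{X}$ is not as transparent.
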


\begin{proof} Define
$$
\bar{J}_P (K,\rho)=\sum_{k=0}^\infty \left( \matrix{c}{x(k)\\ \xi(k)}^T\matrix{c}{x(k)\\ \xi(k)} + \rho \bar{u}(k)^T\bar{u}(k) \right),
$$
and
$$
\bar{K}_\rho^*(P)=\argmin_{K\in\K} \bar{J}_P (K,\rho).
$$
Using Lemma~\ref{prop:0}, we know that $\bar{K}_\rho^*(P)$ exists and is unique. We can find $\bar{J}_P (\bar{K}_\rho^*(P),\rho)$ using $X(\rho)$ as the unique positive definite solution of the discrete algebraic Riccati equation
\begin{equation} \label{eqn_Riccati_rho}
\tilde{A}^TX(\rho)\tilde{B}(\rho I+\tilde{B}^TX(\rho)\tilde{B})^{-1}\tilde{B}^TX(\rho)\tilde{A}-\tilde{A}^TX(\rho)\tilde{A}+X(\rho)-I=0.
\end{equation}
According to \cite{Komaroff1994}, the positive-definite matrix $X(\rho)$ is lower-bounded by
\begin{eqnarray}
X(\rho)-I &\geq& \tilde{A}^T\left(\bar{X}(\rho)^{-1}+\rho^{-1}\tilde{B}\tilde{B}^T\right)^{-1}\tilde{A} \nonumber\\ &=&\tilde{A}^T\left(\bar{X}(\rho)-\bar{X}(\rho)\tilde{B}\left(\rho I+\tilde{B}^T\bar{X}(\rho)\tilde{B}\right)^{-1}\tilde{B}^T\bar{X}(\rho)\right)\tilde{A}, \nonumber
\end{eqnarray}
where
\begin{eqnarray}
\bar{X}(\rho)&=&\tilde{A}^T\left(I+\rho^{-1}\tilde{B}\tilde{B}^T\right)^{-1}\tilde{A} \nonumber = \matrix{cc}{A^TA+I & A^TB \\ BA & B^2+D^2\frac{\rho}{\rho+1}+I}.\nonumber
\end{eqnarray}
Basic algebraic calculations show that
\begin{eqnarray}
\lim_{\rho\rightarrow 0} \left[\bar{X}(\rho)-\bar{X}(\rho)\tilde{B}(\rho I+\tilde{B}^T\bar{X}(\rho)\tilde{B})^{-1}\tilde{B}^T\bar{X}(\rho)\right]\hspace{-.07in} =\hspace{-.07in}\matrix{cc}{A^T(I+B^2)^{-1}A+I & 0 \\ 0 & 0 }.\nonumber
\end{eqnarray}
According to~\cite{Kondo1986}, we know that
$$
\lim_{\rho\rightarrow 0^+}\bar{J}_P (\bar{K}_\rho^*(P),\rho)=J_P(K^*(P)),
$$
and as a result
\begin{eqnarray} \label{eqn_16}
X=\lim_{\rho \rightarrow 0}X(\rho) \geq \matrix{cc}{A & B \\0 & D}^T\matrix{cc}{A^T(I+B^2)^{-1}A+I & 0 \\ 0 & 0 }\matrix{cc}{A & B \\0 & D}+I. \hspace{.3in}
\end{eqnarray}
where $X$ is the unique positive-definite solution of the discrete algebraic Riccati equation in~(\ref{eqn_Riccati}) and consequently
$$
J_P(K^*(P))=\matrix{c}{x_0 \\ \xi(0)}^T \matrix{cc}{X_{11} & X_{12} \\ X_{12}^T & X_{22} } \matrix{c}{x_0 \\ \xi(0)}
$$
with $X$ being partitioned as
$$
X=\matrix{cc}{X_{11} & X_{12} \\ X_{12}^T & X_{22} }.
$$
We know that
$$
\xi(0)=u(0)+w_0=G_2 B^{-1}x_0+w_0=-(X_{22}^{-1}X_{12}^T+DB^{-1})x_0+w_0.
$$
Thus, the cost of the optimal control design $J_P(K^*(P))$ becomes
\begin{eqnarray}
&&\hspace{-.3in}\matrix{c}{x_0 \\ -(X_{22}^{-1}X_{12}^T+DB^{-1})x_0+w_0}^T \hspace{-.06in}\matrix{cc}{X_{11} & X_{12} \\ X_{12}^T & X_{22} } \hspace{-.06in}\matrix{c}{x_0 \\ -(X_{22}^{-1}X_{12}^T+DB^{-1})x_0+w_0}\nonumber \\
&& \hspace{-0.1in}=\matrix{c}{x_0 \\ w_0}^T \matrix{cc}{X_{11}-X_{12}X_{22}^{-1}X_{12}^T+B^{-1}DX_{22}DB^{-1} & -B^{-1}DX_{22} \\ -X_{22}DB^{-1} & X_{22} } \matrix{c}{x_0 \\ w_0}\nonumber \\ && \hspace{-0.1in}=\matrix{c}{x_0 \\ w_0}^T \matrix{cc}{B^{-1}(X_{22}+DX_{22}D-I)B^{-1} & -B^{-1}DX_{22} \\ -X_{22}DB^{-1} & X_{22} } \matrix{c}{x_0 \\ w_0} \label{eqn:inner}
\end{eqnarray}
The second equality is true because of the following equation extracted from the discrete algebraic Riccati equation in~(\ref{eqn_Riccati})
$$
X_{22}=I+BX_{11}B-BX_{12}X_{22}^{-1}X_{12}^TB,
$$
which is equivalent to
\begin{equation} \label{eqn:subRiccati}
X_{11}-X_{12}X_{22}^{-1}X_{12}^T=B^{-1}(X_{22}-I)B^{-1}.
\end{equation}
Using~(\ref{eqn_16}), it is evident that
$$
X_{22}\geq B[A^T(I+B^2)^{-1}A+I]B+I=BWB+I,
$$
and as a result, the inner-matrix in~(\ref{eqn:inner}) is lower-bounded by
\begin{eqnarray}
&&\hspace{-.2in}\matrix{cc}{B^{-1}(X_{22}+DX_{22}D-I)B^{-1} & -B^{-1}DX_{22} \\ -X_{22}DB^{-1} & X_{22} } \nonumber\\&&\hspace{0.3in}=\matrix{cc}{B^{-1}(X_{22}-I)B^{-1} & 0 \\ 0 & 0 }+\matrix{cc}{B^{-1}DX_{22}DB^{-1} & -B^{-1}DX_{22} \\ -X_{22}DB^{-1} & X_{22} } \nonumber\\&&\hspace{0.3in}=\matrix{cc}{B^{-1}(X_{22}-I)B^{-1} & 0 \\ 0 & 0 }+\matrix{c}{-B^{-1}D \\ I}X_{22}\matrix{c}{-B^{-1}D \\ I}^T \nonumber \\ && \hspace{0.3in} \geq  \matrix{cc}{B^{-1}(B W B)B^{-1} & 0 \\ 0 & 0 }+\matrix{c}{-B^{-1}D \\ I}(BWB+I)\matrix{c}{-B^{-1}D \\ I}^T \nonumber \\ && \hspace{0.3in} = \matrix{cc}{W+DWD+D^2B^{-2} & -D(WB+B^{-1}) \\ -(BW+B^{-1})D & BWB+I } \nonumber
\end{eqnarray}
Finally, we get
\begin{equation}
\begin{split}
J_P(K^*(P))&\geq \matrix{c}{x_0 \\ w_0}^T \matrix{cc}{W+DWD+D^2B^{-2} & -D(WB+B^{-1}) \\ -(BW+B^{-1})D & BWB+I } \matrix{c}{x_0 \\ w_0}\nonumber \\&=\matrix{c}{x_0 \\ Bw_0}^T \matrix{cc}{W+DWD+D^2B^{-2} & -D(W+B^{-2}) \\ -(W+B^{-2})D & W+B^{-2} } \matrix{c}{x_0 \\ Bw_0} \nonumber.
\end{split}
\end{equation}
This statement concludes the proof.
\end{proof}

\subsection{Deadbeat Control Design Strategy}
In this subsection, we introduce the deadbeat control design strategy and calculate its competitive ratio.

\begin{definition} \label{def:1} The deadbeat control design strategy $\Gamma^\Delta: \A(S_\p) \times \B(\epsilon) \times \D  \rightarrow \K$ is defined as
$$
\Gamma^\Delta(A,B,D)\triangleq\left[\begin{array}{c|c} D & -B^{-1}D^2 \\ \hline I & -B^{-1}(A+D) \end{array}\right].
$$
\end{definition}

It should be noted that using the deadbeat control design strategy, irrespective of the value of the initial state $x_0$ and the initial disturbance $w_0$, the closed-loop system reaches the origin in just two time-steps. The closed-loop system with deadbeat control design strategy is shown in Figure~\ref{figure1}(a). This feedback loop can be rearranged as the one in Figure~\ref{figure1}(b) which has two separate components. One component is a static deadbeat control design strategy for regulating the state of the plant and the other one is a deadbeat observer for canceling the disturbance. This structure is further discussed in Section~\ref{sec_3.5}, where it is shown that it corresponds to proportional-integral control in some cases. First, we need to calculate an expression for the cost of the deadbeat control design strategy.

\begin{lemma} \label{lem:delta} The cost of the deadbeat control design strategy $\Gamma^\Delta$ for each plant $P=(A,B,D,x_0,w_0)\in \p$ is
$$
J_P (\Gamma^\Delta(A,B,D))=\matrix{c}{x_0 \\ Bw_0}^T \matrix{cc}{Q_{11} & Q_{12}  \\ Q_{12}^T & Q_{22}} \matrix{c}{x_0 \\ Bw_0},
$$
where
\begin{eqnarray}
\label{eqn_15.1}Q_{11}&\hspace{-0.1in}=&\hspace{-0.1in}I+D^2(I+B^{-2})+A^TB^{-2}A+DA^TB^{-2}AD+A^TB^{-2}D+DB^{-2}A, \hspace{.2in} \\
\label{eqn_15.2}Q_{12}&\hspace{-0.1in}=&\hspace{-0.1in}-D-A^TB^{-2}-DB^{-2}-DA^TB^{-2}A, \\
\label{eqn_15.3}Q_{22}&\hspace{-0.1in}=&\hspace{-0.1in}A^TB^{-2}A+B^{-2}+I.
\end{eqnarray}
\end{lemma}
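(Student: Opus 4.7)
The plan is a direct trajectory computation. The deadbeat controller obeys $x_K(k+1)=Dx_K(k)-B^{-1}D^2 x(k)$ with $x_K(0)=0$ and $u(k)=x_K(k)-B^{-1}(A+D)x(k)$. Substituting into $x(k+1)=Ax(k)+B(u(k)+w(k))$ and using that $B$, $D$ are diagonal (so they commute), one finds $x(k+1)=-Dx(k)+B(x_K(k)+w(k))$. Introducing $\eta(k):=B(x_K(k)+w(k))$ and using $w(k+1)=Dw(k)$ together with the controller state update gives $\eta(k+1)=D\eta(k)-D^2 x(k)$. Combining these two recursions yields $x(k+2)=-Dx(k+1)+\eta(k+1)=0$, so $x(k)\equiv 0$ for $k\ge 2$. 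Plugging $x(k)=x(k+1)=0$ back into the plant equation and using that $B$ is invertible then forces $u(k)+w(k)=0$ for $k\ge 2$ as well.

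Consequently the infinite-horizon sum in~(\ref{cost_easy}) collapses to only four nonzero terms,
$$
J_P(\Gamma^\Delta(A,B,D))=x_0^Tx_0+x(1)^Tx(1)+(u(0)+w(0))^T(u(0)+w(0))+(u(1)+w(1))^T(u(1)+w(1)),
$$
and each of the signals on the right is a linear function of $(x_0,Bw_0)$ that can be written down explicitly: $x(1)=-Dx_0+Bw_0$ (since $x_K(0)=0$); $u(0)+w(0)=B^{-1}(Bw_0-(A+D)x_0)$; and, after a short simplification that again relies on the diagonality of $B$ and $D$, $u(1)+w(1)=B^{-1}A(Dx_0-Bw_0)$. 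Writing $v=Bw_0$, each term becomes a quadratic form in $[x_0^T\;v^T]^T$.

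The remaining step is to collect coefficients. The $x_0 x_0^T$-block receives contributions $I$, $D^2$, $(A+D)^TB^{-2}(A+D)$, and $DA^TB^{-2}AD$; expanding $(A+D)^TB^{-2}(A+D)=A^TB^{-2}A+A^TB^{-2}D+DB^{-2}A+D^2B^{-2}$ and using $D^2+D^2B^{-2}=D^2(I+B^{-2})$ reproduces $Q_{11}$ as in~(\ref{eqn_15.1}). The $v v^T$-block picks up $I$, $B^{-2}$, and $A^TB^{-2}A$, giving $Q_{22}$ as in~(\ref{eqn_15.3}). The cross-terms contribute $-D$, $-(A+D)^TB^{-2}=-A^TB^{-2}-DB^{-2}$, and $-DA^TB^{-2}A$, yielding $Q_{12}$ as in~(\ref{eqn_15.2}).

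There is no real obstacle beyond bookkeeping: the conceptual content is just the deadbeat property $x(k)=0$ for $k\ge 2$ and $u(k)+w(k)=0$ for $k\ge 2$, which truncates the infinite sum; everything that follows is quadratic-form arithmetic in the two explicit signals at $k=0$ and $k=1$. The one place where care is needed is the simplification of $u(1)+w(1)$, where one must use that $D$ commutes with $B$ (because both are diagonal) in order to cancel a $B^{-1}D^2x_0$ against part of $-B^{-1}(A+D)x(1)$ and leave the clean factor $B^{-1}A(Dx_0-Bw_0)$; the diagonality assumption recorded in the definitions of $\B(\epsilon)$ and $\D$ is what makes this step go through.
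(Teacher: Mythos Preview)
Your proof is correct and follows essentially the same approach as the paper: establish the deadbeat property so that only the terms at $k=0,1$ survive, compute $x(1)$, $u(0)+w_0$, and $u(1)+w(1)$ explicitly, and then collect the resulting quadratic-form coefficients. The paper's own proof merely states the deadbeat property and calls the remaining simplification ``trivial'', whereas you carry out both the verification that $x(k)=0$ and $u(k)+w(k)=0$ for $k\ge 2$ and the coefficient bookkeeping in full; there is no substantive difference in method.
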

\begin{proof} First, it should be noted that the state of the closed-loop system with $\Gamma^\Delta(A,B,D)$ in feedback reaches the origin in two time-steps. Now, using the system state transition, one can calculate the deadbeat control design strategy cost as
\begin{equation}
\begin{split}
J_P (\Gamma^\Delta(A,B,D))&= x_0^Tx_0+(u(0)+w_0)^T(u(0)+w_0) \nonumber \\ &\hspace{.8in}+x(1)^Tx(1)+(u(1)+w(1))^T(u(1)+w(1)), \nonumber
\end{split}
\end{equation}
where $x(1)=-Dx_0+Bw_0$, $u(0)=-B^{-1}(A+D)x_0,$ and $u(1)=-B^{-1}(A+D)x(1)-B^{-1}D^2x_0$. The rest of the proof is a trivial simplification.
\end{proof}

\jpfig{1.0}{Figure1}{figure1}{The closed-loop system with (a)~the deadbeat control design strategy $\Gamma^\Delta$, and (b)~rearranging this control design strategy as a static deadbeat control design and a deadbeat observer design.}

We need the following lemma in order to calculate the competitive ratio of the deadbeat control design strategy $\Gamma^\Delta$ when the control graph $G_\K$ is a supergraph of the plant graph $G_\p$. As the notation $K^*(P)$ is reserved for the optimal control design strategy for a given control graph $G_\K$, from now on, we will use $K^*_C$ to denote the centralized optimal control design strategy (i.e., the optimal control design strategy with access to full-state measurement).

\begin{lemma} \label{prop:2} Let $G_{\K}\supseteq G_\p$, and $P=(A,B,D,x_0,w_0)\in \p$ be a plant with $A$ being a nilpotent matrix of degree two. Then, $J_P(K^*(P))=J_P(K^*_C(P))$. \end{lemma}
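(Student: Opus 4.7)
The plan is to establish $J_P(K^*(P)) = J_P(K^*_C(P))$ by showing that, when $A^2 = 0$ and $G_\K \supseteq G_\p$, the centralized optimal controller $K^*_C(P)$ derived in Section~\ref{subsec_OCCDS} already satisfies the structural constraint $K^*_C(P) \in \K(S_\K)$. Once this is established, the claim is immediate: $K^*_C(P)$ minimizes $J_P$ over all state-feedback controllers (hence $J_P(K^*_C(P)) \leq J_P(K^*(P))$), while the inclusion $K^*_C(P)\in\K(S_\K)$ combined with the optimality of $K^*(P)$ over $\K(S_\K)$ yields the reverse inequality.

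To analyze the structure of $K^*_C(P)$, I would first combine the $(1,2)$-block of the Riccati equation~(\ref{eqn_Riccati}) with identity~(\ref{eqn:subRiccati}) to derive the clean relation $X_{12} = A^TB^{-1}(X_{22}-I)$. Substituting this into~(\ref{eqn:G}) gives $G_1 = -X_{22}^{-1}(X_{22}-I)B^{-1}A^2$, which vanishes when $A^2 = 0$. Consequently both $D_K = G_2 B^{-1}$ and $B_K = G_1 + DG_2B^{-1} - G_2B^{-1}A$ depend on the Riccati solution only through the matrix $M := X_{22}^{-1}X_{12}^T = (I-X_{22}^{-1})B^{-1}A$. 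I would then study the reduced Riccati $X_{22} = B^2 + I + (A')^T(I-X_{22}^{-1})A'$ where $A' := B^{-1}AB$ (which inherits $(A')^2 = 0$), and verify that its solution simplifies to the closed form $X_{22} = BWB + I$ with $W = A^T(I+B^2)^{-1}A + I$. This yields $X_{12}^T = BWA = BA$ (using $WA = A$, since $A^T(I+B^2)^{-1}A^2 = 0$) and hence $M = X_{22}^{-1}BA$.

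The main obstacle is verifying that $M$ respects the sparsity pattern of $A$, i.e.\ $M_{ij} = 0$ whenever $A_{ij} = 0$. I anticipate that the identity $X_{22} - (B^2 + I) = BA^T(I+B^2)^{-1}AB$, together with the observation that this correction term is supported only on indices corresponding to the nonzero columns of $A$ (equivalently, the non-sinks of $G_\p$), forces $X_{22}^{-1}$ to be block-diagonal with respect to the partition of $\{1,\ldots,n\}$ into sinks of $G_\p$ and their complement; moreover, the constraint $A^2 = 0$ prevents columns of $A$ from having supports that straddle these blocks in an incompatible way, so that $X_{22}^{-1}BA$ cannot produce entries outside the support of $A$. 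Once this sparsity claim is established, both $D_K = -M - DB^{-1}$ and $B_K = -DM + DB^{-1}A - D^2B^{-1}$ have sparsity contained in $S_\p \subseteq S_\K$, and therefore $K^*_C(P) \in \K(S_\K)$, completing the proof.
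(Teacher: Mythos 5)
Your high-level plan coincides with the paper's: exhibit the centralized optimal controller of Section~\ref{subsec_OCCDS}, show it lies in $\K(S_\K)$, and then sandwich $J_P(K^*(P))$ between $J_P(K^*_C(P))$ on both sides. Your derivation of $X_{12}=A^TB^{-1}(X_{22}-I)$ from the $(1,2)$-block of~(\ref{eqn_Riccati}) combined with~(\ref{eqn:subRiccati}), and the resulting identity $G_1=-X_{22}^{-1}(X_{22}-I)B^{-1}A^2=0$, are correct and are actually tidier than the paper's route, which simply writes down a closed-form $X$ and reads the gains off it.

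The proof does not close, however, at exactly the point you flag. First, the closed form $X_{22}=BWB+I$ does not solve your (correctly derived) reduced Riccati equation for a general degree-two nilpotent $A$ once $B$ is a non-scalar diagonal matrix: substituting it in leaves the residual condition $A^TB^{-1}\bigl[(I+B^2)^{-1}-X_{22}^{-1}\bigr]B^{-1}A=0$, which after a Woodbury expansion reduces to requiring $A\,\Lambda\,A=0$ for the positive diagonal matrix $\Lambda=B(I+B^2)^{-1}B$; but $A^2=0$ does not imply $A\Lambda A=0$ for non-scalar $\Lambda$ (take $A=\bigl[\begin{smallmatrix}1&-1\\1&-1\end{smallmatrix}\bigr]$, $B=\diag(1,2)$, for which the candidate $X_{22}$ fails the equation numerically). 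Second, and more importantly, the sparsity step is not merely "anticipated'' but is the whole content of the lemma, and the block-diagonality argument you sketch does not deliver it. The fill-in of $X_{22}$ couples any two columns of $A$ that share a common target row, not only sinks versus non-sinks. Concretely, with $n=5$ and nonzero entries $a_{21}=a_{51}=a_{42}=a_{43}=1$, $a_{45}=-1$ (so $A^2=0$ and $(1,3)\notin E_\p$), the indices $2,3,5$ all lie in one block of $X_{22}^{-1}$, and $M_{31}=(X_{22}^{-1})_{32}b_{22}a_{21}+(X_{22}^{-1})_{35}b_{55}a_{51}$ is proportional to $b_{22}^2/(1+b_{22}^2)-b_{55}^2/(1+b_{55}^2)$, which is nonzero whenever $b_{22}\neq b_{55}$; so $D_K$ acquires an entry outside $S_\p$. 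What the argument really needs is the stronger identity $X_{22}^{-1}X_{12}^T=(I+B^2)^{-1}BA$, i.e.\ a \emph{diagonal} matrix times $A$, from which membership in $\K(S_\K)$ is immediate since $G_\K\supseteq G_\p$ and all self-loops are present; this identity again hinges on $A\Lambda A=0$. It does hold in every instance where the paper invokes the lemma ($A=re_je_i^T$ with $i\neq j$ and $B=\epsilon I$), but it is not a consequence of $A^2=0$ alone, so your proof as written would fail on the general class of plants the statement quantifies over.
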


\begin{proof}
When matrix $A$ is nilpotent, the unique positive-definite solution of the discrete algebraic Riccati equation~(\ref{eqn_Riccati}) is
$$
X=\matrix{cc}{A^TA+I & A^TB \\ BA & BA^T(I+B^2)^{-1}AB+I+B^2 }.
$$
Consequently, the optimal centralized controller gains in~(\ref{eqn:G}) are
$$
G_1=0, \hspace{.2in} G_2=-(I+B^2)^{-1}BAB-D,
$$
and as a result, the optimal centralized controller $K^*_C(P)$ is
\begin{equation}
\begin{split}
K^*_C(P)&= \left[\begin{array}{c|c} D & D(I+B^2)^{-1}B^{-1}A-B^{-1}D^2 \\ \hline I  & -(I+B^2)^{-1}BA-B^{-1}D \end{array}\right] \nonumber \\ &= (zI-D)^{-1} D(I+B^2)^{-1}B^{-1}A-B^{-1}D^2 -(I+B^2)^{-1}BA-B^{-1}D. \nonumber
\end{split}
\end{equation}
Thus, $K^*_C(P)\in \K(S_\K)$ because the control graph $G_\K$ is a supergraph of the plant graph $G_\p$. Now, considering that $K^*(P)$ is the global optimal decentralized controller, it has a lower cost than any other decentralized controller $K\in \K(S_\K)$, specially $K^*_C(P)\in\K(S_\K)$ for this particular plant. Hence,
\begin{equation} \label{eqn:ineq1}
J_P(K^*(P))\leq J_P(K^*_C(P)).
\end{equation}
On the other hand, it is evident that
\begin{equation} \label{eqn:ineq2}
J_P(K^*_C(P)) \leq J_P(K^*(P)).
\end{equation}
This concludes the proof.
\end{proof}

\begin{remark} Finding the optimal structured controller is intractable in general, even when the global model is known. In this paper, we concentrate on the cases where the control graph $G_{\K}$ is a supergraph of the plant graph $G_\p$, because it is relatively easier to solve the optimal control design problem under limited model information in this case. In addition, although, in this paper, we may not be able to find the optimal structured controller $K^*(P)$ for a particular plant in some of the cases, we can still compute the competitive ratio $r_\p$. Thus, in a sense, this makes the competitive ratio a quite powerful tool.
\end{remark}

Next, we derive the competitive ratio of the deadbeat control design method.

\begin{theorem} \label{tho:1} Let $G_{\K}\supseteq G_\p$. Then, the competitive ratio of the deadbeat control design method $\Gamma^\Delta$ is equal to
$$
r_\p(\Gamma^\Delta)= \frac{2\epsilon^2+1+\sqrt{4\epsilon^2+1}}{2\epsilon^2}.
$$
\end{theorem}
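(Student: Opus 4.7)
The plan is to sandwich the competitive ratio between matching upper and lower bounds, using Lemmas~\ref{lem:delta}, \ref{lem:1} and~\ref{prop:2} as the main tools.

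For the upper bound, combine Lemma~\ref{lem:delta} (the exact cost of $\Gamma^\Delta$) with Lemma~\ref{lem:1} applied to $K^*_C(P)$, which is a valid lower bound on $J_P(K^*(P))$ since restricting to $\K(S_\K)$ only increases the optimum, so $J_P(K^*(P))\geq J_P(K^*_C(P))$. This yields
$$
\frac{J_P(\Gamma^\Delta(A,B,D))}{J_P(K^*(P))}\;\leq\;\frac{v^T Q v}{v^T M v},\qquad v=\matrix{c}{x_0 \\ Bw_0},
$$
where $Q$ and $M$ are the block matrices of Lemmas~\ref{lem:delta} and~\ref{lem:1}. For fixed $(A,B,D)$ the supremum over $v$ is the largest generalized eigenvalue of the pencil $(Q,M)$, and the task reduces to showing that $\lambda^*M-Q\geq 0$ for every admissible $(A,B,D)$, with $\lambda^*=(2\epsilon^2+1+\sqrt{4\epsilon^2+1})/(2\epsilon^2)$. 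I would first compute $Q-M$ in closed form, using the diagonal identity $B^{-2}-(I+B^2)^{-1}=B^{-2}(I+B^2)^{-1}$ and the auxiliary matrix $\Psi=(I+B^2)^{-1/2}B^{-1}A$; this gives $Q_{22}-M_{22}=\Psi^T\Psi$, while $Q_{11}-M_{11}$ and $Q_{12}-M_{12}$ assemble from $\Psi$, $D\Psi$, and the cross term $A^T B^{-2}D$. A Schur-complement analysis on $\lambda^*M-Q$ then reduces positive semidefiniteness to a scalar inequality in the nonzero singular values of $\Psi$; because $B$ is diagonal with $\underline{\sigma}(B)\geq\epsilon$, the bound tightens at $B=\epsilon I$, where the scalar condition becomes $\mu^2\epsilon^2-\mu-1\leq 0$ with $\mu=\lambda-1$, holding with equality exactly at $\lambda=\lambda^*$.

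For the lower bound, I would exhibit a family of plants that saturates the ratio in the limit. Since $G_\p$ has no isolated node, there exist $i\neq j$ with $(j,i)\in E_\p$; take $A=a\,e_i e_j^T$, $B=\epsilon I$, and $D=0$, so that $A\in\A(S_\p)$ is nilpotent of degree two. Lemma~\ref{prop:2} then gives $J_P(K^*(P))=J_P(K^*_C(P))$, and the explicit Riccati solution displayed in its proof saturates Lemma~\ref{lem:1} (the inequality $X_{22}\geq BWB+I$ becomes equality when $A^2=0$, and the same tightness propagates through the remaining blocks). Thus $J_P(\Gamma^\Delta)/J_P(K^*)=v^T Q v/v^T M v$ holds with equality for this plant. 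Concentrating $x_0$ at coordinate $j$ and $w_0$ at coordinate $i$ with the ratio read off from the generalized eigenvector identified in the upper-bound analysis, and letting $a\to\infty$, the ratio tends to $\lambda^*$, matching the upper bound and yielding $r_\p(\Gamma^\Delta)=\lambda^*$.

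The main obstacle is the uniform positive-semidefiniteness step $\lambda^*M-Q\geq 0$: the cross terms $A^TB^{-2}D$ and $DB^{-2}A$ couple the $x_0$- and $Bw_0$-subspaces, and the Schur-complement manipulation must reduce them to a single one-parameter inequality whose worst case is attained precisely by the rank-one nilpotent $A$, diagonal $B=\epsilon I$, and $D=0$ used in the lower-bound construction. Everything else is book-keeping in the block arithmetic of $Q$ and $M$.
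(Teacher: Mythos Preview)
Your proposal is correct and follows essentially the same route as the paper: bound the ratio above by the generalized Rayleigh quotient $v^TQv/v^TMv$ using Lemmas~\ref{lem:delta} and~\ref{lem:1}, reduce $\lambda^*M-Q\geq 0$ via Schur complement to a scalar inequality that depends only on the diagonal entries of $B$ (all $A$- and $D$-dependent terms cancel in the reduction), and saturate the bound with a rank-one nilpotent family $A=a\,e_ie_j^T$, $B=\epsilon I$, invoking Lemma~\ref{prop:2} so that $J_P(K^*(P))=J_P(K^*_C(P))$ and the inequality of Lemma~\ref{lem:1} becomes an equality. The only cosmetic differences are that the paper takes $D=I$ in the lower-bound family whereas you take $D=0$ (either works, since the reduced scalar condition is $D$-free and Lemma~\ref{lem:1} is tight for every $D$ once $A^2=0$), and that the scalar condition for $\lambda M-Q\geq 0$ at $B=\epsilon I$ should read $\mu^2\epsilon^2-\mu-1\geq 0$ rather than $\leq 0$, with equality pinning down $\lambda^*$.
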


\begin{proof} First, let us define the set of all real numbers that are greater than or equal to the competitive ratio of the deadbeat control design strategy
$$
\mathcal{M}=\left\{\beta\in\mathbb{R}\;\left|\; \frac{J_P(\Gamma^\Delta(A,B,D))}{J_P(K^*(P))}\leq \beta \right.\forall P\in\p \right\}.
$$
It is evident that
$$
J_P(K^*_C(P)) \leq J_P(K^*(P))
$$
for each plant $P\in\p$ irrespective of the control graph $G_\K$, and as a result
\begin{equation} \label{eqn:ineqKp*andKP*C}
\frac{J_P(\Gamma^\Delta(A,B,D))}{J_P(K^*(P))} \leq \frac{J_P(\Gamma^\Delta(A,B,D))}{J_P(K^*_C(P))}.
\end{equation}
Using~(\ref{eqn:ineqKp*andKP*C}) and Lemmas~\ref{lem:delta} and~\ref{lem:1}, $\beta$ belongs to the set $\mathcal{M}$ if
\begin{equation} \label{eqn_first_condition}
\hspace{0.2in} \frac{\matrix{c}{x_0 \\ Bw_0 }^T \matrix{cc}{Q_{11} & Q_{12}  \\ Q_{12}^T & Q_{22}} \matrix{c}{x_0 \\ Bw_0 }}{\matrix{c}{x_0 \\ Bw_0 }^T \matrix{cc}{W+DWD+D^2B^{-2} & -D(W+B^{-2}) \\ -(W+B^{-2})D & W+B^{-2} }  \matrix{c}{x_0 \\ Bw_0 }} \leq \beta,
\end{equation}
for all $A\in \A(S_\p)$, $B\in \B(\epsilon)$, $D\in \D$, $x_0\in \mathbb{R}^n$, and $w_0\in \mathbb{R}^n$ where $Q_{11}$, $Q_{12}$, and $Q_{22}$ are matrices defined in~(\ref{eqn_15.1})--(\ref{eqn_15.3}). The condition~(\ref{eqn_first_condition}) is satisfied, if and only if, for all $A\in \A(S_\p)$, $B\in \B(\epsilon)$, and $D\in \D $, we have
$$
\matrix{cc}{\beta(W+DWD+D^2B^{-2})-Q_{11} & -\beta D(W+B^{-2})-Q_{12} \\ -\beta (W+B^{-2})D-Q_{12}^T & \beta (W+B^{-2})-Q_{22} } \geq 0.
$$
Using Schur complement \cite{Zhang2005}, $\beta$ belongs to the set $\mathcal{M}$ if
\begin{eqnarray}
Z &=&\beta (W+B^{-2})-Q_{22}\nonumber \\&=&\beta (A^T(I+B^2)^{-1}A+I+B^{-2})-A^TB^{-2}A-B^{-2}-I \label{eqn:Z} \\&=&A^T(\beta (I+B^2)^{-1}-B^{-2})A+(\beta-1)(B^{-2}+I)\geq 0 \nonumber,
\end{eqnarray}
and
\begin{eqnarray} \label{eqn:condition1}
&&\hspace{-.4in}-\left[-\beta D(W+B^{-2})-Q_{12} \right] \left[\beta(W+B^{-2})-Q_{22}\right]^{-1}\left[-\beta (W+B^{-2})D-Q_{12}^T\right]\nonumber \\ && \hspace{1.5in} +\beta(W+DWD+D^2B^{-2})-Q_{11} \geq 0,
\end{eqnarray}
for all $A\in\A(S_\p)$, $B\in\B(\epsilon)$, and $D\in\D$. We can do the simplification
\begin{eqnarray}
-\beta D(W+B^{-2})-Q_{12}&=&-\beta D(A^T(I+B^2)^{-1}A+I+B^{-2})\nonumber \\  &&\hspace{0.4in}-(-D-A^TB^{-2}-DB^{-2}-DA^TB^{-2}A)\nonumber \\&=&-(\beta-1)D(I+B^{-2})+A^TB^{-2}\nonumber \\  &&\hspace{0.4in} -DA^T(\beta(I+B^2)^{-1}-B^{-2})A \nonumber \\&=&-DZ+A^TB^{-2}, \nonumber
\end{eqnarray}
and as a result, the condition~(\ref{eqn:condition1}) is equivalent to
\begin{equation} \label{eqn:condition1.5}
\beta(W+DWD+D^2B^{-2})-Q_{11}-[-DZ+A^TB^{-2}] Z^{-1}[-ZD+B^{-2}A]\geq 0,
\end{equation}
where $Z$ is defined in~(\ref{eqn:Z}). Furthermore, we can simplify $\beta(W+DWD+D^2B^{-2})-Q_{11}$ as
\begin{eqnarray}
&&A^T(\beta(I+B^2)^{-1}-B^{-2})A +(\beta-1)[ I+D^2B^{-2}+D^2 ]\nonumber \\ &&\hspace{1.0in}+DA^T(\beta(I+B^2)^{-1}-B^{-2})AD -A^TB^{-2}D-DB^{-2}A\nonumber,
\end{eqnarray}
which helps us to expand condition~(\ref{eqn:condition1.5}) to
\begin{eqnarray} \label{eqn:condition2}
A^T\left(\beta(I+B^2)^{-1}-B^{-2}\right)A&&+(\beta-1)\left( I+D^2B^{-2}+D^2 \right)\nonumber \\ &&\hspace{-1in}+DA^T\left(\beta(I+B^2)^{-1}-B^{-2}\right)AD -A^TB^{-2}D-DB^{-2}A \nonumber \\ &&\hspace{-1in}- D\left(A^T\left(\beta (I+B^2)^{-1}-B^{-2}\right)A+(\beta-1)(B^{-2}+I)\right)D \nonumber \\ &&\hspace{-1in}+ A^TB^{-2}D + DB^{-2}A - A^TB^{-2}Z^{-1}B^{-2}A \geq 0.
\end{eqnarray}
Hence, it follows from~(\ref{eqn:condition2}) that (\ref{eqn:condition1.5}) can be simplified as
\begin{eqnarray} \label{eqn:condition2_simple}
A^T\left(\beta(I+B^2)^{-1}-B^{-2}\right)A-A^TB^{-2}Z^{-1}B^{-2}A \geq 0.
\end{eqnarray}
The condition~(\ref{eqn:Z}) is satisfied, for all plants $P\in\p$, if $\beta \geq 1+1/\epsilon^2$, since in this case $\beta (I+B^2)^{-1}-B^{-2}\geq 0$ (recall that any matrix $B$ is diagonal and its diagonal elements are lower-bounded by $\epsilon$). Furthermore, for all $\beta \geq 1+1/\epsilon^2$, it is easy to see that $Z\geq (\beta-1)(B^{-2}+I)$. As a result, it can be shown that the condition~(\ref{eqn:condition2_simple}) is satisfied if
\begin{equation} \label{eqn:condition3}
A^T\left(\beta(I+B^2)^{-1}-B^{-2}-(\beta-1)^{-1} B^{-2}(B^{-2}+I)^{-1}B^{-2} \right)A + (\beta-1) I \geq 0.
\end{equation}
Now, the condition~(\ref{eqn:condition3}) is satisfied if
\begin{eqnarray} \label{eqn:condition3_simple}
\beta(I+B^2)^{-1}-B^{-2}-(\beta-1)^{-1} B^{-2}(B^{-2}+I)^{-1}B^{-2}\geq 0.
\end{eqnarray}
Noting that the matrix $B=\diag(b_{11},\dots,b_{nn})$, one can rewrite~(\ref{eqn:condition3_simple}) as
\begin{eqnarray} \label{eqn:condition3_simple_element}
\frac{\beta}{1+b_{ii}^2}-\frac{1}{b_{ii}^2}-\frac{1}{\beta-1}\frac{1}{b_{ii}^2(1+b_{ii}^2)}\geq 0.
\end{eqnarray}
for all $b_{ii}\geq \epsilon$. Retracing our steps backward, it easy to see that the set
\begin{eqnarray}
\left \{ \beta\;|\; \beta \geq 1+\frac{1}{\epsilon^2} \mbox{ and~(\ref{eqn:condition3_simple_element}) satisfied} \right\}&=& \left \{ \beta \geq \frac{2\epsilon^2+1+\sqrt{4\epsilon^2+1}}{2\epsilon^2} \right\}\nonumber \subseteq \mathcal{M}\nonumber.
\end{eqnarray}
Therefore, we get
\begin{equation} \label{eqn_proof_1}
r_\p(\Gamma^\Delta)=\sup_{P\in\p} \frac{J_P(\Gamma^\Delta(A,B,D))}{J_P(K^*(P))} \leq \frac{2\epsilon^2+1+\sqrt{4\epsilon^2+1}}{2\epsilon^2}.
\end{equation}
\par Now, we have to show that this upper bound can be achieved by a family of plants. Consider a one-parameter family of matrices $\{A(r)\}$ defined as $A(r)=re_je_i^T$ for each $r\in \mathbb{R}$. It is always possible to find indices $i$ and $j$ such that $i\neq j$ and $(s_\p)_{ji}\neq 0$, because of the assumption that there be no isolated node in the plant graph. Let $B=\epsilon I$ and $D=I$. For each $r\in \mathbb{R}$, the matrix $A(r)$ is a nilpotent matrix of degree two, that is, $A(r)^2=0$. Thus, using Lemma~\ref{prop:2}, we get
$$
J_P(K^*_C(P)) = J_P(K^*(P))
$$
for this special plant. The solution to the discrete algebraic Riccati equation in~(\ref{eqn_Riccati}) is
$$
X=\matrix{cc}{A(r)^TA(r)+I & \epsilon A(r)^T \\ \epsilon A(r) & \epsilon^2/(1+\epsilon^2) A(r)^TA(r) + (\epsilon^2+1)I}.
$$
Thus, if we assume that
\begin{equation} \label{eqn_x_0}
x_0=\frac{(\epsilon^2 + 1)(\sqrt{4\epsilon^2 + 1}+ 1)}{2\epsilon r} e_i,
\end{equation}
and
\begin{equation} \label{eqn_w_0}
w_0=\frac{(\epsilon^2 + 1)(\sqrt{4\epsilon^2 + 1} + 1)}{2\epsilon^2r}e_i-e_j,
\end{equation}
the cost of the optimal control design strategy is
\begin{eqnarray} \label{eqn:KP*}
J_P(K^*(P))&=& \frac{(\epsilon^2+1)\sqrt{4\epsilon^2 + 1} + 5\epsilon^2 + 4\epsilon^4 +1}{2\epsilon^2}  \\ &&\hspace{1.2in}+ \frac{(2\epsilon^2 + \sqrt{4\epsilon^2 + 1} + 1)\sqrt{4\epsilon^2 + 1}}{2\epsilon^2r^2}, \nonumber
\end{eqnarray}
and the cost of the deadbeat control design strategy is
\begin{equation}
\begin{split}
J_P(\Gamma^\Delta(A,B,D))&=\frac{(\epsilon^2 + 1)(3\epsilon^2\sqrt{4\epsilon^2 + 1} + 5\epsilon^2 + 4\epsilon^4 + \sqrt{4\epsilon^2 + 1} + 1)}{2\epsilon^4} \label{eqn:Delta} \\ &\hspace{0.0in}+\frac{(\epsilon^2 + 1)(\epsilon^2\sqrt{4\epsilon^2 + 1} + \epsilon^4\sqrt{4\epsilon^2 + 1} + \epsilon^2 + 3\epsilon^4 + 2\epsilon^6)}{2\epsilon^4r^2} .
\end{split}
\end{equation}
This results in
\begin{equation} \label{eqn_proof_2}
\lim_{r\rightarrow \infty} \frac{J_P(\Gamma^\Delta(A,B,D))}{J_P(K^*(P))}=\frac{2\epsilon^2+1+\sqrt{4\epsilon^2+1}}{2\epsilon^2}.
\end{equation}
Equation~(\ref{eqn_proof_1}) together with~(\ref{eqn_proof_2}) conclude the proof.
\end{proof}

\begin{remark} Consider the limited model information design problem given by the plant graph $G_\p$ in Figure~\ref{figure0a}($a$) and the control graph $G_\K$ in Figure~\ref{figure0a}($b$). Theorem~\ref{tho:1} shows that, if we apply the deadbeat control design strategy to this particular problem, the performance of the deadbeat control design strategy, at most, can be $(2\epsilon^2+1+\sqrt{4\epsilon^2+1})/(2\epsilon^2)$ times the cost of the optimal control design strategy~$K^*$.  In~fact, Theorem~\ref{tho:1} states that this relationship between the performance of the deadbeat control design and the optimal control design with full model information holds for a rather general class of systems. For the case that $\mathcal{B}=\{I\}$, the relationship is given by $(3+\sqrt{5})/2\approx 2.62$, so the deadbeat control design strategy is never worse than two or three times the optimal. \end{remark}

With this characterization of $\Gamma^{\Delta}$ in hand, we are now ready to tackle problem~(\ref{eqn:0}).

\section{Plant Graph Influence on Achievable Performance} \label{sec:Gp} In this section, we study the relationship between the plant graph and the achievable closed-loop performance in terms of the competitive ratio as a performance metric and the domination as a partial order on the set of limited model information control design strategies. To this end, we first state and prove two lemmas which will simplify further developments.

\begin{lemma} \label{prop:1} Fix real numbers $a\in \mathbb{R}$ and $b\in \mathbb{R}$. For any $x\in \mathbb{R}$, we have $x^2+(a+bx)^2\geq a^2/(1+b^2).$ \end{lemma}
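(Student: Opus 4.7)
The plan is to recognize the left-hand side as a convex quadratic in $x$ and minimize it by completing the square. Specifically, I would expand
$$
x^2 + (a+bx)^2 = (1+b^2)x^2 + 2abx + a^2,
$$
and then rewrite this as
$$
(1+b^2)\left(x + \frac{ab}{1+b^2}\right)^2 + a^2 - \frac{a^2 b^2}{1+b^2} = (1+b^2)\left(x + \frac{ab}{1+b^2}\right)^2 + \frac{a^2}{1+b^2}.
$$
Since $1+b^2 > 0$, the first term is nonnegative for every $x\in\mathbb{R}$, and the claimed inequality follows immediately, with equality attained at $x = -ab/(1+b^2)$.

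I anticipate no real obstacle here; the only thing to be careful about is that the identity $a^2 - a^2 b^2/(1+b^2) = a^2/(1+b^2)$ is applied correctly, and that the factorization $(1+b^2) \neq 0$ is used to justify dividing by $1+b^2$ inside the completed square. Equivalently, one could appeal to the Cauchy--Schwarz inequality applied to the vectors $(1, b)$ and $(x, a+bx) - \alpha(1,b)$ for a suitable $\alpha$, or simply to the fact that the distance from the origin to the line $y = a + bx$ in $\mathbb{R}^2$ is $|a|/\sqrt{1+b^2}$, so that $x^2 + (a+bx)^2 \geq a^2/(1+b^2)$; but the completing-the-square argument is the most direct and self-contained and is what I would use in the paper.
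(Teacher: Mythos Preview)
Your proof is correct and essentially the same as the paper's: both minimize the convex quadratic $x\mapsto x^2+(a+bx)^2$ explicitly, finding the minimizer $\bar{x}=-ab/(1+b^2)$ and minimum value $a^2/(1+b^2)$. The only cosmetic difference is that the paper locates the minimizer by setting the derivative to zero and invoking strict convexity, whereas you complete the square.
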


\begin{proof} Consider the function $x\mapsto x^2+(a+bx)^2$. Since this function is both continuously differentiable and strictly convex, we can find its unique minimizer as $\bar{x}=-ab/(1+b^2)$ by setting its derivative to zero. As a result, we get
$$
x^2+(a+bx)^2\geq \bar{x}^2+(a+b\bar{x})^2=a^2/(1+b^2).
$$
This concludes the proof.
\end{proof}

\begin{lemma} \label{lem:2} Let the design graph $G_\comm$ be a totally disconnected graph, and $G_\K\supseteq G_\p$. Furthermore, assume that node $i$ is not a sink in the plant graph $G_{\p}$. Then, the competitive ratio of a control design strategy $\Gamma\in \comm$ is bounded only if $a_{ij}+b_{ii}(d_\Gamma)_{ij}(A,B,D)=0$ for all $j\neq i$ and all matrices $A\in \A(S_\p)$, $B\in \B(\epsilon)$, and $D\in \D$.
\end{lemma}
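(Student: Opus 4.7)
The plan is a proof by contradiction. Suppose $\Gamma \in \comm$ has bounded competitive ratio but there exist plant matrices $(A_0, B, D)$ and an index $j \neq i$ such that
$$c := a_{ij}^0 + b_{ii}\,(d_\Gamma)_{ij}(A_0, B, D) \neq 0.$$
Since node $i$ is not a sink of $G_\p$, there exists $\ell \neq i$ with $(s_\p)_{\ell i} = 1$, so the entry $a_{\ell i}$ is a free parameter in $\A(S_\p)$. I form a one-parameter family of plants $P(r) = (A(r), B, D, e_j, 0)$ in which $A(r) \in \A(S_\p)$ coincides with $A_0$ except that $a_{\ell i}(r) = r$, and the initial data is $x_0 = e_j$, $w_0 = 0$. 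The goal is to show that $J_{P(r)}(\Gamma(A(r), B, D)) \to \infty$ while $J_{P(r)}(K^*(P(r)))$ stays bounded, contradicting $r_\p(\Gamma) < \infty$.

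First I bound the optimal cost uniformly in $r$. The static state-feedback with gain $-B^{-1} A(r)$ has sparsity pattern $S_\p$ and hence lies in $\K(S_\K)$, since $G_\K \supseteq G_\p$. Under this feedback $x(1) = A(r) e_j + B\,(-B^{-1} A(r) e_j) = 0$, and since $w_0 = 0$ forces $w(k) \equiv 0$, the state and input stay at zero for all $k \geq 1$. Therefore $J_{P(r)}(K^*(P(r))) \leq 1 + \|B^{-1} A(r) e_j\|^2$. Because $j \neq i$, the $j$-th column of $A(r)$ is independent of $r$ (the only perturbation sits in column $i$), so this upper bound is a constant $M$ that does not depend on $r$.

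Next I trace $\Gamma$'s trajectory. Since $G_\comm$ is totally disconnected, the $i$-th subcontroller $\Gamma_i$ depends only on row $i$ of $A$ together with $b_{ii}$ and $d_{ii}$; row $i$ of $A(r)$ agrees with row $i$ of $A_0$, hence $(d_\Gamma)_{ij}(A(r), B, D) = (d_\Gamma)_{ij}(A_0, B, D)$ for every $r$. Consequently $u_i(0) = (d_\Gamma)_{ij}(A_0, B, D)$, and then
$$x_i(1) = a_{ij}^0 + b_{ii}(d_\Gamma)_{ij}(A_0, B, D) = c,$$
a fixed nonzero number independent of $r$. Propagating one more step, using $w_\ell(1) = 0$, gives
$$x_\ell(2) = a_{\ell i}(r)\, x_i(1) + \sum_{m \neq i} a_{\ell m}^0\, x_m(1) + b_{\ell \ell}\, u_\ell(1) = r\, c + R(r),$$
where $R(r)$ denotes the sum of the last two collections of terms.

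The bounded-competitive-ratio assumption yields $J_{P(r)}(\Gamma(A(r), B, D)) \leq r_\p(\Gamma)\, M =: C$, uniformly in $r$. Because every summand of the cost is nonnegative, $x_m(1)^2 \leq C$ for each $m$ and $u_\ell(1)^2 = (u_\ell(1) + w_\ell(1))^2 \leq C$; hence $|R(r)|$ is bounded by a constant independent of $r$. Combined with the display above, $|x_\ell(2)| \geq r|c| - |R(r)| \to \infty$ as $r \to \infty$, contradicting $x_\ell(2)^2 \leq C$. Therefore $c = 0$. The main subtlety is arranging the family $A(r)$ so that three requirements hold simultaneously: the perturbed entry $a_{\ell i}$ actually transmits the residual from subsystem $i$ into another subsystem (non-sink hypothesis), the component $\Gamma_i$ producing the residual is blind to the perturbation so that $c$ remains frozen as $r$ grows (totally disconnected design graph), and the initial state lies along a column of $A(r)$ untouched by $r$ so that the benchmark $J_P(K^*(P))$ remains bounded.
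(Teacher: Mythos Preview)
Your proof is correct and reaches the same conclusion, but the route differs from the paper's in a few interesting ways. The paper works by contrapositive and chooses $(x_0,w_0)=(0,e_j)$, zeroing out all rows of $A$ except rows $i$ and $\ell$; the residual then appears one step later, at $x_i(2)$, and the lower bound on $\Gamma$'s cost is obtained by completing the square via Lemma~\ref{prop:1} applied to $(u_\ell(2)+w_\ell(2))^2+x_\ell(3)^2$. The denominator is controlled through the explicit deadbeat cost formula (Lemma~\ref{lem:delta}). You instead take $(x_0,w_0)=(e_j,0)$, perturb only the single entry $a_{\ell i}$ of $A_0$, and read off the residual directly at $x_i(1)$; you then bound the optimal cost by the one-step static deadbeat $-B^{-1}A(r)$, which is admissible because $G_\K\supseteq G_\p$ and whose cost depends only on column $j$ of $A(r)$. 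The main novelty in your argument is the bootstrapping step: rather than engineering the remainder $R(r)$ to vanish, you use the assumed finiteness of $r_\p(\Gamma)$ to force $x_m(1)^2\le C$ and $u_\ell(1)^2\le C$ uniformly in $r$, which is enough to make $|x_\ell(2)|\to\infty$ contradict $x_\ell(2)^2\le C$. This avoids both Lemma~\ref{prop:1} and the deadbeat cost computation, and it works even though $\Gamma_\ell$ itself depends on $r$. The paper's approach, on the other hand, yields an explicit quantitative lower bound on the ratio (growing like $r^2$), which is reused verbatim in later proofs; your contradiction argument is cleaner for this lemma in isolation but does not produce that reusable estimate.
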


\begin{proof} The proof is by contrapositive. Let us assume that there exist matrices $\bar{A}\in \A(S_\p)$, $B\in \B(\epsilon)$, $D\in \D $, and indices $i$ and $j$ such that $i\neq j$ and $\bar{a}_{ij}+b_{ii} (d_\Gamma)_{ij}(\bar{A},B,D)\neq 0$. Let $1\leq \ell\leq n$ be an index such that $\ell\neq i$ and $(s_\p)_{\ell i}\neq 0$ (such an index always exists because node $i$ is not a sink in the plant graph $G_\p$). Define matrix $A$ such that $A_i=\bar{A}_i$, $A_\ell=re_i^T$, and $A_t=0$ for all $t\neq i,\ell$. Because the design graph is a totally disconnected graph, we know that $\Gamma_i(\bar{A},B,D)=\Gamma_i(A,B,D)$. Using the structure of the cost function in~(\ref{cost_easy}) and plant dynamics in~(\ref{eqn_1}), the cost of this control design strategy for $w_0=e_j$ and $x_0=0$ is lower-bounded by
\begin{eqnarray}
J_{(A,B,D,0,e_j)}(\Gamma(A,B,D))&\geq& \left(u_\ell(2)+w_\ell(2)\right)^2+x_\ell(3)^2 \nonumber \\ &=& \left(u_\ell(2)+w_\ell(2)\right)^2+\left(rx_i(2)+b_{\ell\ell}[u_\ell(2)+w_\ell(2)]\right)^2.\nonumber
\end{eqnarray}
Based on Lemma~\ref{prop:1} and the fact that $x_i(2)=(a_{ij}+b_{ii}(d_\Gamma)_{ij}(A,B,D))b_{jj}$ (see~Figure~\ref{figure2}), we get
\begin{eqnarray}
J_{(A,B,D,0,e_j)}(\Gamma(A,B,D))&\geq& r^2x_i(2)^2/(1+b_{\ell\ell}^2) \nonumber \\ &=& (a_{ij}+b_{ii}(d_\Gamma)_{ij}(A,B,D))^2b_{jj}^2 r^2/(1+b_{\ell\ell}^2).\nonumber
\end{eqnarray}
On the other hand, the cost of the deadbeat control design strategy is
\begin{eqnarray}
J_{(A,B,D,0,e_j)}(\Gamma^\Delta(A,B,D))&=&e_j^TB^T(A^TB^{-2}A+B^{-2}+I)Be_j\nonumber \\&=&b_{jj}^2+1+a_{ij}^2b_{jj}^2/b_{ii}^2.\nonumber
\end{eqnarray}
Note that the deadbeat control design strategy is applicable here since the control graph $G_{\K}$ is a supergraph of the plant graph $G_\p$. This gives
\begin{eqnarray} \label{eqn_7}
r_p(\Gamma)&=&\sup_{P \in \p } \frac{J_P (\Gamma(A,B,D))}{J_P (K^*(P))}\nonumber \\&=& \sup_{P \in \p } \left[ \frac{J_P (\Gamma(A,B,D))}{J_P (\Gamma^\Delta(A,B,D))} \frac{J_P (\Gamma^\Delta(A,B,D))}{J_P (K^*(P))} \right] \nonumber \\& \geq & \sup_{P \in \p } \frac{J_P (\Gamma(A,B,D))}{J_P (\Gamma^\Delta(A,B,D))} \\&\geq& \frac{(a_{ij}+b_{ii}(d_\Gamma)_{ij}(A,B,D))^2b_{jj}^2/(1+b_{\ell\ell}^2) }{b_{jj}^2+1+a_{ij}^2b_{jj}^2/b_{ii}^2}\lim_{r\rightarrow \infty}r^2 =\infty.\nonumber
\end{eqnarray}
This inequality proves the statement by contrapositive as the competitive ratio is not bounded in this case.
\end{proof}

\jpfig{0.7}{Figure2}{figure2}{State evolution of the closed-loop system with any control design strategy $\Gamma$ when $x_0=0$.}

\subsection{Plant Graphs without Sinks} First, we assume that there is no sink in the plant graph and try to characterize the optimal control design strategy in terms of the competitive ratio and domination.

\begin{theorem} \label{tho:2}  Let the plant graph $G_{\p}$ contain no sink, the design graph $G_{\mathcal{C}}$ be a totally disconnected graph, and $G_{\K}\supseteq G_\p$. Then, the competitive ratio of any control design strategy $\Gamma\in\comm$ satisfies
$$
r_{\p} (\Gamma)\geq \frac{2\epsilon^2+1+\sqrt{4\epsilon^2+1}}{2\epsilon^2}.
$$
\end{theorem}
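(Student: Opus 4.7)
The plan is to combine Lemma~\ref{lem:2} with the adversarial one-parameter family from the end of the proof of Theorem~\ref{tho:1}, and show that any $\Gamma \in \comm$ cannot do asymptotically better than the deadbeat strategy on this family.

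First, if $r_\p(\Gamma) = \infty$ the claim holds vacuously, so I assume $r_\p(\Gamma) < \infty$. Since $G_\p$ contains no sink, every index $i$ is a non-sink, and Lemma~\ref{lem:2} applied at each $i$ forces
\[
  (d_\Gamma)_{ij}(A,B,D) = -\frac{a_{ij}}{b_{ii}} \qquad \text{for every } i\neq j \text{ and every admissible } (A,B,D).
\]
Thus the static feedthrough of $\Gamma$ cancels the off-diagonal part of $A$ exactly, and from time $k=1$ onwards the closed loop decomposes into $n$ independent scalar disturbance-accommodation problems with local parameters $(a_{ii}, b_{ii}, d_{ii})$.

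Second, I pick indices $i\neq j$ with $(j,i)\in E_\p$ (possible since $j$ is not a sink) and use the plant $A(r)=re_je_i^T$, $B=\epsilon I$, $D=I$ with $x_0, w_0$ as in \eqref{eqn_x_0}--\eqref{eqn_w_0}. Because $A(r)^2 = 0$ and $G_\K \supseteq G_\p$, Lemma~\ref{prop:2} gives $J_P(K^*(P)) = J_P(K^*_C(P))$, whose explicit value is \eqref{eqn:KP*}. Propagating the closed loop with the forced value $(d_\Gamma)_{ji}(A(r),\epsilon I, I) = -r/\epsilon$ shows that $x_j(1) = -\epsilon$, that $x_i(1) = O(1/r)$, and that the time-$0$ contribution from subsystem $j$ equals $(u_j(0) + w_j(0))^2 = (C_1/\epsilon + 1)^2$, with $C_1 = (\epsilon^2+1)(\sqrt{4\epsilon^2+1}+1)/(2\epsilon)$; all of these quantities are independent of the remaining freedom in $\Gamma$.

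Third, for $k\geq 1$ each subsystem behaves as a decoupled scalar disturbance-accommodation problem driven by its own initial condition and its own local dynamic controller extracted from $\Gamma$. By separability of the cost~\eqref{cost_easy}, I lower-bound the tail cost subsystem-by-subsystem by the minimum scalar cost obtained from the $n=1$ specialization of Lemma~\ref{lem:1}. In the limit $r\to\infty$ these subsystem minima coincide with the tail cost attained by $\Gamma^\Delta$, so
\[
  \liminf_{r\to\infty} J_P(\Gamma(A(r),\epsilon I, I)) \;\geq\; \lim_{r\to\infty} J_P(\Gamma^\Delta(A(r),\epsilon I, I)),
\]
the right-hand side being the leading term of \eqref{eqn:Delta}. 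Dividing by $J_P(K^*(P))$ and letting $r\to\infty$ reproduces exactly the ratio of Theorem~\ref{tho:1}, yielding the claimed bound.

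The principal difficulty is the separability step. Under the Lemma~\ref{lem:2} constraint the dynamic parameters $(A_\Gamma, B_\Gamma, C_\Gamma)$ and the diagonal of $D_\Gamma$ remain free; in particular, $(B_\Gamma)_{j,\cdot}$ may depend arbitrarily on $r$ because the designer of subcontroller $j$ sees $A_j = re_i^T$. Making rigorous the claim that any such free choice corresponds to a feasible scalar controller on the decoupled subsystem, whose cost therefore cannot beat the scalar optimum, is the crux. The cleanest route is to reinterpret the subsystem-$j$ closed loop (after the Lemma~\ref{lem:2} cancellation) as a scalar state-feedback disturbance-accommodation problem to which the scalar specialization of Lemma~\ref{lem:1} directly applies, and then to sum these scalar lower bounds over $\ell=1,\ldots,n$.
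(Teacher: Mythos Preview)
Your separability step is the weak point, and it is not just a technical nuisance: as written, it does not go through. After Lemma~\ref{lem:2} forces $(d_\Gamma)_{ji}=-r/\epsilon$, the \emph{state equation} for subsystem $j$ indeed reads $x_j(k+1)=\epsilon(\tilde u_j(k)+w_j(k))$ with $\tilde u_j(k)=u_j(k)+(r/\epsilon)x_i(k)$, but the \emph{cost} still contains $(u_j(k)+w_j(k))^2$, not $(\tilde u_j(k)+w_j(k))^2$. Hence the subsystem-$j$ contribution to the cost depends explicitly on $x_i(k)$ for every $k$, and you cannot minimise it independently of subsystem~$i$. Your remark that $(B_\Gamma)_{j,\cdot}$ may depend arbitrarily on $r$ makes this worse, since the coupling through the controller state is then also $r$-dependent. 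The claim ``in the limit $r\to\infty$ these subsystem minima coincide with the tail cost attained by $\Gamma^\Delta$'' therefore rests on showing $x_i(k)=O(1/r)$ uniformly in $k$ for the actual closed loop produced by $\Gamma$, which you have not established (and which is delicate because other subcontrollers may feed $x_j$ back into $x_{K,i}$ via $B_\Gamma$).

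The paper avoids all of this. Its proof uses only the two terms you already computed in your second step: $(u_j(0)+w_j(0))^2=(C_1/\epsilon+1)^2$ and $x_j(1)^2=\epsilon^2$. Both are \emph{forced} by Lemma~\ref{lem:2} and the specific $x_0,w_0$, independent of every other degree of freedom in $\Gamma$, and both are \emph{independent of $r$}. Their sum turns out to equal exactly the leading ($r$-independent) term of $J_P(\Gamma^\Delta)$ in~\eqref{eqn:Delta}. Since $J_P(K^*(P))$ in~\eqref{eqn:KP*} converges to a finite nonzero limit as $r\to\infty$, dividing these two forced terms by $J_P(K^*(P))$ already yields the bound. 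No decoupling, no tail analysis, no scalar Lemma~\ref{lem:1} is needed: simply drop everything in $J_P(\Gamma)$ except $(u_j(0)+w_j(0))^2+x_j(1)^2$ and take $r\to\infty$.
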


\begin{proof} Consider a one-parameter family of matrices $\{A(r)\}$ defined as $A(r)=re_je_i^T$ for each $r\in \mathbb{R}$. It is always possible to find indices $i$ and $j$ such that $i\neq j$ and $(s_\p)_{ji}\neq 0$, because of the assumption that there is no isolated node in the plant graph. Let $B=\epsilon I$ and $D=I$. Let $\Gamma\in\comm$ be a control design strategy with design graph $G_\comm$. Without loss of generality, we can assume that $\gamma_{ji}(A,B,D)=-r/\epsilon$ since otherwise, using Lemma~\ref{lem:2}, we get that $r_\p(\Gamma)$ is infinity, and as a result the inequality in the theorem statement is trivially satisfied. Thus, for each $r\in \mathbb{R}$, the cost of the control design strategy $\Gamma$ for $x_0$ in~(\ref{eqn_x_0}) and $w_0$ in~(\ref{eqn_w_0}) is lower-bounded by
\begin{eqnarray}
J_P(\Gamma(A,B,D)) &\geq& (u_j(0)+w_j(0))^2 + x_j(1)^2 \nonumber \\ &=& \left(\frac{(\epsilon^2 + 1)(\sqrt{4\epsilon^2 + 1}+ 1)}{2\epsilon^2} +1 \right)^2+ \epsilon^2 \nonumber \\ &=&\frac{(\epsilon^2 + 1)(3\epsilon^2\sqrt{4\epsilon^2 + 1} + 5\epsilon^2 + 4\epsilon^4 + \sqrt{4\epsilon^2 + 1} + 1)}{2\epsilon^4}.\nonumber
\end{eqnarray}
On the other hand, for each $r\in \mathbb{R}$, the matrix $A(r)$ is a nilpotent matrix of degree two, that is, $A(r)^2=0$. Consequently, using Lemma~\ref{prop:2}, the cost of the optimal control design strategy $K^*(P)$ for $x_0$ in~(\ref{eqn_x_0}) and $w_0$ in~(\ref{eqn_w_0}) is given by~(\ref{eqn:KP*}). This results in
$$
r_\p(\Gamma)\geq \lim_{r\rightarrow \infty} \frac{J_P(\Gamma(A,B,D))}{J_P(K^*(P))}=\frac{2\epsilon^2+1+\sqrt{4\epsilon^2+1}}{2\epsilon^2}.
$$
\end{proof}

Theorem~\ref{tho:2} shows that the deadbeat control design method $\Gamma^\Delta$ is a minimizer of the competitive ratio $r_{\p}$ as a function over the set of limited model information design methods $\comm$. The following theorem shows that it is also undominated by methods of this type, if and only if, the plant graph $G_{\p}$ has no sink.

\begin{theorem} \label{tho:3} Let the design graph $G_{\mathcal{C}}$ be a totally disconnected graph, and $G_\K\supseteq G_\p$. Then, the control design strategy $\Gamma^\Delta$ is undominated if and only if there is no sink in the plant graph $G_{\p}$. \end{theorem}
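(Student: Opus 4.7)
I prove both directions of the if and only if, using the cost decomposition $J_P(\Gamma)=\sum_{i=1}^n J_P^{(i)}(\Gamma)$ with $J_P^{(i)}(\Gamma)=\sum_k [x_i(k)^2+(u_i(k)+w_i(k))^2]$.

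\textbf{Contrapositive of the forward direction.} Suppose $G_\p$ contains a sink $i^*$; I exhibit $\Gamma'\in\comm$ dominating $\Gamma^\Delta$. Since $a_{ji^*}=0$ for all $j\neq i^*$ (sink) and $B$ is diagonal, the control $u_{i^*}$ enters the dynamics only of $x_{i^*}$; hence $J_P^{(j)}$ for $j\neq i^*$ does not depend on row $i^*$ of the control design. I define $\Gamma'$ to coincide with $\Gamma^\Delta$ on every row except $i^*$; for $\Gamma'_{i^*}(A_{i^*},b_{i^*i^*},d_{i^*i^*})$ I use the scalar LQ-optimal controller derived from the scalar Riccati equation with parameters $(a_{i^*i^*},b_{i^*i^*},d_{i^*i^*})$ whenever $a_{i^*j}=0$ for all $j\neq i^*$, and $\Gamma^\Delta_{i^*}$ otherwise. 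The branch condition inspects only the row $A_{i^*}$, so $\Gamma'\in\comm$. On non-isolated plants the two designs coincide and $J_P(\Gamma')=J_P(\Gamma^\Delta)$; on isolated plants, subsystem $i^*$ is fully decoupled, so $J_P^{(i^*)}(\Gamma')$ equals the scalar LQ minimum, which is strictly smaller than the scalar deadbeat cost whenever the Riccati gain differs from $-(a_{i^*i^*}+d_{i^*i^*})/b_{i^*i^*}$. For instance, $a_{i^*i^*}=b_{i^*i^*}=1$, $d_{i^*i^*}=0$ yields Riccati gain $-(\sqrt{5}-1)/2\neq -1$, giving strict improvement. Thus $\Gamma'$ dominates $\Gamma^\Delta$.

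\textbf{Reverse direction.} Assume $G_\p$ has no sink and suppose by contradiction that $\Gamma\in\comm$ dominates $\Gamma^\Delta$. Dominance and Theorem~\ref{tho:1} give $r_\p(\Gamma)\le r_\p(\Gamma^\Delta)<\infty$; since every node is non-sink, Lemma~\ref{lem:2} applies at each index and forces $(d_\Gamma)_{ij}(A,B,D)=-a_{ij}/b_{ii}$ for all $i\neq j$, matching the off-diagonal feedthrough of $\Gamma^\Delta$. Evaluating at $A=0$, the Section~\ref{subsec_OCCDS} formulas reduce to $G_1=0$, $G_2=-D$, so $K^*(0,B,D)=\Gamma^\Delta(0,B,D)$; dominance then forces $J_P(\Gamma)=J_P(\Gamma^\Delta)=J_P(K^*)$ for every $x_0,w_0$ at this plant, and $\Gamma(0,B,D)$ coincides with $\Gamma^\Delta(0,B,D)$ in closed-loop behaviour. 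To extend to general $A$, I introduce the deviation matrix $\Delta:=B\,D_\Gamma+(A+D)$, which is diagonal by Lemma~\ref{lem:2} and vanishes identically for $\Gamma^\Delta$, and evaluate $J_P(\Gamma)-J_P(\Gamma^\Delta)\le 0$ on the probing family $A(r)=re_je_i^T$ from the proof of Theorem~\ref{tho:1} together with impulsive initial data. A direct expansion shows that the dominant $r$-linear contribution to this difference arises from the coupling $a_{ji}$ acting on the one-step mismatch $\Delta x_0$; requiring nonpositivity as $r\to\infty$ across sign patterns of $x_0,w_0$ forces the diagonal entry $\delta_i$ to vanish. The no-sink hypothesis is essential here: it supplies, for each $i$, some $j\neq i$ with $(s_\p)_{ji}\neq 0$, so the probing reaches every diagonal entry of $\Delta$. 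An analogous argument using impulsive $x_0$ and a two-step horizon then pins down the dynamic matrices $A_\Gamma,B_\Gamma,C_\Gamma$, so $\Gamma(A,B,D)$ and $\Gamma^\Delta(A,B,D)$ have identical closed-loop behaviour for every plant, $J_P(\Gamma)\equiv J_P(\Gamma^\Delta)$, and the strict inequality required by dominance is contradicted.

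The hardest step is this final extension: turning the PSD dominance condition into pointwise equality of closed-loop behaviour on every $A\in\A(S_\p)$. One must produce, for each remaining degree of freedom of $\Gamma$ (the diagonal of $D_\Gamma$ at general $A$ and each of the state-space blocks), a probing plant whose cost sensitivity isolates that parameter, while respecting the local-information constraint that $\Gamma_i$ depends only on $(A_i,b_{ii},d_{ii})$. The no-sink hypothesis is what makes the coupling amplification $r\to\infty$ available at every index; without it, the diagonal entry $\delta_{i^*}$ at a sink $i^*$ remains free, which is exactly the slack exploited by the construction in the forward direction.
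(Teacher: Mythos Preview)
Your forward direction is correct and in fact simpler than the paper's. The paper builds a $\Gamma$ that strictly improves on $\Gamma^\Delta$ at \emph{every} plant, by designing the optimal controller for the sink row even when that row has off-diagonal entries (exploiting that the other rows run deadbeat, so their states vanish after two steps). You instead branch on whether the sink's row is diagonal, using scalar LQ only in the decoupled case; this suffices because domination needs only one plant with strict improvement. One cosmetic point: your example $b_{i^*i^*}=1$ tacitly assumes $\epsilon\le 1$; replacing $1$ by $\epsilon$ fixes this.

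Your reverse direction has a genuine gap. The probing family you invoke, $A(r)=re_je_i^T$, has $A_i=0$ for every $r$. Since $G_\comm$ is totally disconnected, $(d_\Gamma)_{ii}$ depends only on row $A_i$, so along this family you are only testing $\delta_i$ at $A_i=0$---information you already had from your $A=0$ step. Nothing in your argument ever constrains $\delta_i$ at a general $A_i$. The paper's key move, which you miss, is to \emph{freeze} the offending row: given $\bar A,\,B,\,D$ and an index $j$ with $\bar A_j+b_{jj}(D_\Gamma)_j(\bar A,B,D)+d_{jj}e_j^T\neq 0$, keep $A_j=\bar A_j$ and set $A_i=re_j^T$ in some row $i$ with $(s_\p)_{ij}\neq 0$ (available precisely because $j$ is not a sink). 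Locality of $\Gamma_j$ guarantees the mismatch persists; the coupling $r$ then amplifies the resulting $x_j(2)\neq 0$ into an $r^2$ term in $J_P(\Gamma)$ that eventually exceeds $J_P(\Gamma^\Delta)$. This is what actually ``reaches every diagonal entry of $\Delta$'' at general $A$.

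Your step~4 is also not a proof. Once $D_\Gamma$ is pinned down, the paper does not run another amplification argument on the state-space blocks; it observes directly that with matching feedthrough the closed-loop state under $\Gamma$ coincides with that under $\Gamma^\Delta$ for the first two steps (both drive $x_j$ to zero at $k=2$), and any remaining difference $\Gamma_j\neq\Gamma_j^\Delta$ then forces $x_j(3)\neq 0$, giving strictly larger cost than deadbeat, which stays at zero. You would need to supply an argument of this kind rather than assert it.
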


\begin{proof} First, we have to prove the sufficiency part of the theorem. Assume that there is no sink in the plant graph. For proving this claim, we are going to prove that for any control design method $\Gamma\in \comm \setminus \{\Gamma^\Delta \}$, there exists a plant $P=(A,B,D,x_0,w_0)\in \p$ such that $J_P(\Gamma(A,B,D))>J_P(\Gamma^\Delta(A,B,D))$. First, assume that there exist matrices $\bar{A}\in \A(S_\p)$, $B\in \B(\epsilon)$, and $D\in \D$ and an index $j$ such that $\bar{A}_j+b_{jj}(D_\Gamma)_j(\bar{A},B,D)+d_{jj}e_j^T\neq 0$. Without loss of generality, we can assume that $\bar{a}_{jj}+b_{jj}(d_\Gamma)_{jj}(\bar{A},B,D)+d_{jj}\neq 0$, because otherwise, using Equation~(\ref{eqn_7}) in the proof of Lemma~\ref{lem:2}, we know that, if there exists $\ell \neq j$ such that $\bar{a}_{j\ell}+b_{jj}(d_\Gamma)_{j\ell}(\bar{A},B,D)\neq 0$, the ratio of the cost of the control design strategy $\Gamma$ to the cost of the deadbeat design strategy $\Gamma^\Delta$ is unbounded. Therefore, the control design strategy $\Gamma$ cannot dominate the deadbeat control design strategy $\Gamma^\Delta$. Pick an index $i\neq j$ such that $(s_\p)_{ij}\neq 0$. It is always possible to pick such index $i$ because there is no sink in the plant graph. Define matrix $A$ such that $A_j=\bar{A}_j$, $A_{i}=re_j^T$, and $A_\ell=0$ for all $\ell\neq i,j$. It should be noted that $\Gamma_{j}(A,B,D)=\Gamma_{j}(\bar{A},B,D)$ because the design graph is a totally disconnected graph. We know that $r+b_{ii}(d_\Gamma)_{ij}(A,B,D)=0$ because otherwise the control design strategy $\Gamma$ cannot dominate the deadbeat control design strategy. The cost of this control design strategy for $w=e_j$ and $x_0=0$ satisfies
\begin{equation}
\begin{split}
J_{P}(\Gamma(A,B,D)) &\geq (u_i(1)+w_i(1))^2+(u_i(2)+w_i(2))^2+x_i(3)^2 \nonumber \\& = r^2b_{jj}^2/b_{ii}^2 + (u_i(2)+w_i(2))^2 + (x_j(2)r+b_{ii}[u_i(2)+w_i(2)])^2, \nonumber
\end{split}
\end{equation}
because of the structure of the cost function~(\ref{cost_easy}) and the plant dynamics~(\ref{eqn_1}). Now, using Lemma~\ref{prop:1}, we have
$$
J_{P}(\Gamma(A,B,D)) \geq r^2b_{jj}^2/b_{ii}^2 + x_j(2)^2 r^2/(1+b_{ii}^2).
$$
As a result
\begin{eqnarray}
&&J_{P}(\Gamma(A,B,D))-J_{P}(\Gamma^\Delta(A,B,D)) \label{eqn:diff} \\&& \hspace{0.3in} \geq (\bar{A}_{jj}+b_{jj}(d_\Gamma)_{jj}(\bar{A},B,D)+d_{jj})^2b_{jj}^2 r^2/(1+b_{ii}^2)-(b_{jj}^2+1+a_{jj}^2),\nonumber
\end{eqnarray}
since $x_j(2)=(\bar{A}_{jj}+b_{jj}(d_\Gamma)_{jj}(\bar{A},B,D)+d_{jj})b_{jj}$ (see~Figure~\ref{figure2}) and
\begin{eqnarray}
J_{(A,B,D,0,e_j)}(\Gamma^\Delta(A,B,D))&=&e_j^TB^T(A^TB^{-2}A+B^{-2}+I)Be_j\nonumber\\&=&b_{jj}^2+1+r^2b_{jj}^2/b_{ii}^2+a_{jj}^2.\nonumber
\end{eqnarray}
Thus, if we pick $r$ large enough, the difference in~(\ref{eqn:diff}) becomes positive, which shows that the control design strategy $\Gamma$ cannot dominate the deadbeat control design strategy $\Gamma^\Delta$. Now, assume that there exist matrices $\bar{A}\in \A(S_\p)$, $B\in \B(\epsilon)$, and $\bar{D}\in \D$ and an index $j$ such that $\bar{A}_j+b_{jj}(D_\Gamma)_j(\bar{A},B,\bar{D})+\bar{d}_{jj}e_j^T= 0$ but $\Gamma_{j}(\bar{A},B,\bar{D})\neq \Gamma_{j}^\Delta(\bar{A},B,\bar{D})$. Define matrix $A$ such that $A_j=\bar{A}_j$ and $A_\ell=0$ for all $\ell\neq j$ and matrix $D$ as $d_{jj}=\bar{d}_{jj}$ and $d_{\ell\ell}=0$ for all $\ell\neq j$. Let $x_0=0$. If there exists an index $i\neq j$ such that $\gamma_{ij}(\bar{A},B,D)\neq \gamma_{ij}^\Delta (\bar{A},B,D)$ pick $w_0=e_i$, otherwise, pick $w_0=e_j$. For this special case, the state of the closed-loop system with the controller $\Gamma(A,B,D)$ is equal to the state of the closed-loop system with the controller $\Gamma^\Delta(A,B,D)$ for the first and the second time-steps (see Figure~\ref{figure2} and Figure~\ref{figure3}). As a result, the state of the subsystem $j$ reaches zero in two time-steps. Now, since $\Gamma_{j}(\bar{A},B,\bar{D})\neq \Gamma_{j}^\Delta(\bar{A},B,\bar{D})$, in the next time-step the state of the subsystem $j$ becomes non-zero again. This results in a performance cost greater than the performance cost of the control design strategy $\Gamma^\Delta$. Thus, the control design $\Gamma^\Delta$ is undominated by the control design method $\Gamma$.

\par Now, we have to prove the necessary part of the theorem. Proving this part is equivalent to proving that if there exists (a sink) $j$ such that for every $i\neq j$, $(s_\p)_{ij}=0$, then there exists a control design strategy $\Gamma$ which can dominate the deadbeat control design strategy. Without loss of generality, let $j=n$; i.e., assume that $(s_\p)_{in}=0$ for all $i\neq n$. In this situation, we can rewrite the matrix $A$ as
$$
A=\matrix{cccc}{ a_{11} & \cdots & a_{1,n-1} & 0 \\  \vdots & \ddots & \vdots & \vdots \\ a_{n-1,1} & \cdots & a_{n-1,n-1} & 0 \\ a_{n1} & \cdots & a_{n,n-1} & a_{nn} },
$$
Define $\bar{x}_0=[x_1(0)\;\cdots\;x_{n-1}(0)]^T$ and $\bar{w}_0=[w_1(0)\;\cdots\;w_{n-1}(0)]^T$. Let $\Gamma(A,B,D)$ be defined as $A_\Gamma(A,B,D)=D$, $C_\Gamma(A,B,D)=I$,
\begin{eqnarray}
B_\Gamma(A,B,D)&=&\matrix{cccc}{-\frac{d_{11}^2}{b_{11}} & \cdots & 0 & 0 \\  \vdots &  \ddots & \vdots & \vdots \\  0 &  \cdots & -\frac{d_{n-1,n-1}^2}{b_{n-1,n-1}} & 0 \\ (b_\Gamma)_{n1} & \cdots & (b_\Gamma)_{n,n-1} & (b_\Gamma)_{nn} }, \nonumber \\
D_\Gamma(A,B,D)&=&\matrix{cccc}{ -\frac{a_{11}+d_{11}}{b_{11}} & \cdots & -\frac{a_{1,n-1}}{b_{11}} & 0 \\ \vdots &  \ddots & \vdots & \vdots \\ -\frac{a_{n-1,1}}{b_{n-1,n-1}} & \cdots & -\frac{a_{n-1,n-1}+d_{n-1,n-1}}{b_{n-1,n-1}} & 0 \\ (d_\Gamma)_{n1} & \cdots & (d_\Gamma)_{n,n-1} & (d_\Gamma)_{nn} }, \nonumber
\end{eqnarray}
where $\bar{B}_\Gamma=[(b_\Gamma)_{n1}\;\cdots\;(b_\Gamma)_{nn}]$ and $\bar{D}_\Gamma=[(d_\Gamma)_{n1}\;\cdots\;(d_\Gamma)_{nn}]$ are tunable gains for the last subsystem. We denote the cost of applying the deadbeat controller to subsystems $1,\dots,n-1$ by $J^{(1)}_{(A,B,D,\bar{x}_0,\bar{w}_0)}$. This cost is independent of the control design parameters $\bar{B}_\Gamma$ and $\bar{D}_\Gamma$, because the last subsystem is a sink and it cannot affect the other subsystems. The overall cost of the controller is
$$
J_{(A,B,x_0,w_0)}(\Gamma(A,B,D))=J^{(1)}_{(A,B,D,\bar{x}_0,\bar{w}_0)}+J^{(2)}_{(A,B,D,x_0,w_0)}(\bar{B}_\Gamma,\bar{D}_\Gamma),
$$
where $J^{(2)}_{(A,B,D,x_0,w_0)}(\bar{B}_\Gamma,\bar{D}_\Gamma)$ is the cost of the controller designed for the last subsystem. This cost $J^{(2)}_{(A,B,D,x_0,w_0)}( \bar{B}_\Gamma , \bar{D}_\Gamma )$ is independent of the rest of the system's model, because the deadbeat (for subsystems $1,\dots,n-1$) cancel out all dependencies in matrix $A$, thus, one can design the optimal controller for the lower part of the system without the model information of the upper part. Now, we can use the method mentioned in Subsection~\ref{subsec_OCCDS} to design the optimal controller for the lower part and find the optimal gains
$$
\bar{B}_\Gamma=\frac{d_{nn}}{b_{nn}}\left((\alpha+1) A_n - D_n\right), \hspace{.2in} \bar{D}_\Gamma=\frac{1}{b_{nn}}\left(\alpha A_n-D_n\right),
$$
where
$$
\alpha=\frac{2}{b_{nn}^2+a_{nn}^2+1+\sqrt{a_{nn}^4+2a_{nn}^2b_{nn}^2-2a_{nn}^2+b_{nn}^4+2b_{nn}^2+1}}-1.
$$
Note that this new control design strategy is always applicable since the control graph $G_\K$ is supergraph of the plant graph $G_\p$. Therefore, there exists a control design strategy which satisfies
$$
J_{(A,B,D,x_0,w_0)}(\Gamma(A,B,D))\leq J_{(A,B,D,x_0,w_0)}(\Gamma^\Delta(A,B,D)),
$$
for all matrices $A\in \A(S_\p)$, $B\in \B(\epsilon)$, and $D\in \D$ and all vectors $x_0\in\mathbb{R}^n$ and $w_0\in\mathbb{R}^n$. Consider the matrix $A\in \A(S_\p)$ such that $A_n=re_n^T$ and $A_\ell=0$ for all $\ell\neq n$. Let $B=\epsilon I$ and $D=I$. For this special system, for all $r>0$, we have
\begin{eqnarray}
J_{(A,B,D,0,e_n)}(\Gamma(A,B,D)) &=&\frac{\sqrt{r^4 + 2r^2\epsilon^2 - 2r^2 + \epsilon^4 + 2\epsilon^2 + 1} + r^2 + \epsilon^2 + 1}{2} \nonumber \\ & <& r^2+\epsilon^2+1 \nonumber \\ &=& J_{(A,B,D,0,e_n)}(\Gamma^\Delta(A,B,D)).\nonumber
\end{eqnarray}
Thus, the control design strategy $\Gamma$ dominates the deadbeat control design strategy $\Gamma^\Delta$.
\end{proof}

\begin{remark} Consider the limited model information design problem given by the plant graph $G'_\p$ in Figure~\ref{figure0a}($a'$), the control graph $G'_\K$ in Figure~\ref{figure0a}($b'$), and the design graph $G'_\comm$ in Figure~\ref{figure0a}($c'$). Theorems~\ref{tho:2} and~\ref{tho:3} show that the deadbeat control design strategy $\Gamma^\Delta$ is the best control design strategy that one can propose based on local model of the subsystems and the plant graph, because the deadbeat control design strategy is the minimizer of the competitive ratio and it is undominated.
\end{remark}

We use the construction in proof of the ``only if'' part of Theorem~\ref{tho:3} to build a control design strategy for the plant graphs with sinks in the next subsection.

\jpfig{0.7}{Figure3}{figure3}{State evolution of the closed-loop system with deadbeat control design strategy $\Gamma^\Delta$ when $x_0=0$.}

\subsection{Plant Graphs with Sinks}
In this section, we study the case where there are $c\geq 1$ sinks in the plant graph. By renumbering the sinks as subsystems number $n-c+1$, $\cdots$, $n$ the matrix $S_\p$ can be written as
\begin{equation} \label{eqn:10}
S_{\p} = \matrix{c|c}{ (S_{\p})_{11} & 0_{(q-c)\times (c)} \\ \hline (S_{\p})_{21} & (S_{\p})_{22} },
\end{equation}
where
$$
(S_\p)_{11}=\matrix{ccc}{ (s_{\p})_{11} & \cdots & (s_{\p})_{1,n-c}\\ \vdots & \ddots & \vdots \\ (s_{\p})_{n-c,1} & \cdots & (s_{\p})_{n-c,n-c} },
$$
$$
(S_\p)_{21}=\matrix{ccc}{ (s_{\p})_{n-c+1,1} & \cdots & (s_{\p})_{n-c+1,n-c}\\ \vdots & \ddots & \vdots \\ (s_{\p})_{n,1} & \cdots & (s_{\p})_{n,n-c} },
$$
and
$$
(S_\p)_{22}=\matrix{ccc}{ (s_{\p})_{n-c+1,n-c+1} & \cdots & 0\\ \vdots & \ddots & \vdots \\ 0 & \cdots & (s_{\p})_{nn} }.
$$
From now on, without loss of generality, we assume that the structure matrix is the one defined in~(\ref{eqn:10}). The control design method $\Gamma^\Theta$ for this type of systems is defined as
\begin{equation} \label{eqn:11}
\Gamma^\Theta(A,B,D)=\left[\begin{array}{c|c} D & B^{-1}D(F(A,B)+I)A-B^{-1}D^2 \\ \hline I  & B^{-1}(F(A,B)A-D) \end{array}\right], \; \forall P\in \p,
\end{equation}
where
$$
F(A,B)=\textrm{diag}(0,\dots,0,f_{n-c+1}(A,B),\dots,f_n(A,B))
$$
and
\begin{equation}
f_i(A,B)=\frac{2}{b_{ii}^2+a_{ii}^2+1+\sqrt{a_{ii}^4+2a_{ii}^2b_{ii}^2-2a_{ii}^2+b_{ii}^4+2b_{ii}^2+1}}-1
\end{equation}
for all $i=n-c+1, \cdots ,n$.

The control design strategy $\Gamma^\Theta$ applies the deadbeat to every subsystem that is not a sink and, for every sink, applies the same optimal control law as if the node was isolated. We will show that when the plant graph contains sinks, the control design method $\Gamma^\Theta$ has, in the worst case, the same competitive ratio as the deadbeat strategy. However, unlike the deadbeat strategy, it has the additional property of being undominated by limited model information methods for plants in $\p$ when the plant graph $G_{\p}$ has sinks.

\begin{theorem} \label{tho:5} Let the plant graph $G_{\p}$ contain at least one sink, and $G_\K\supseteq G_\p$. Then, the competitive ratio of the design method $\Gamma^\Theta$ introduced in~(\ref{eqn:11}) is
$$
r_{\p}(\Gamma^\Theta)=\left\{ \begin{array}{ll} \frac{2\epsilon^2+1+\sqrt{4\epsilon^2+1}}{2\epsilon^2}, & \hspace{0.3in} \mbox{if} \hspace{0.05in} (S_{\p})_{11}\neq 0 \mbox{ is not diagonal}, \\ 1, & \hspace{0.3in} \mbox{if both} \hspace{0.05in} (S_{\p})_{11}=0 \hspace{0.05in} \mbox{and} \hspace{0.05in} (S_{\p})_{22}=0. \end{array} \right.
$$
\end{theorem}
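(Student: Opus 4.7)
The plan is to bound the competitive ratio of $\Gamma^\Theta$ by comparing it to the deadbeat strategy $\Gamma^\Delta$ for the upper bound, by constructing an extremal plant supported on the non-sink indices for the matching lower bound in the generic case, and by a direct identification with the centralized optimum in the degenerate case.

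For the upper bound in the generic case $(S_\p)_{11}\neq 0$ not diagonal, I would first exploit the block-triangular structure: by (\ref{eqn:10}) every $A\in\A(S_\p)$ has the form $A=\matrix{cc}{A_{11} & 0 \\ A_{21} & A_{22}}$ with $A_{22}$ diagonal. Inspecting (\ref{eqn:11}), the non-sink rows of $\Gamma^\Theta$ (indices $i\leq n-c$, where $F_{ii}=0$) coincide exactly with the corresponding rows of $\Gamma^\Delta$, so both controllers produce the same non-sink trajectory and the same non-sink cost. For the sink rows, the Riccati derivation of Subsection~\ref{subsec_OCCDS} applied per sink with the coupling row $A_i$ included yields precisely the sink rows of (\ref{eqn:11}); hence the sink-block cost under $\Gamma^\Theta$, conditioned on the common deadbeat non-sink trajectory, is no larger than under $\Gamma^\Delta$. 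This gives $J_P(\Gamma^\Theta)\leq J_P(\Gamma^\Delta)$, and Theorem~\ref{tho:1} then yields $r_\p(\Gamma^\Theta)\leq \rho:=(2\epsilon^2+1+\sqrt{4\epsilon^2+1})/(2\epsilon^2)$. For the matching lower bound, the hypothesis that $(S_\p)_{11}$ is not diagonal supplies indices $i\neq j$ with $i,j\leq n-c$ and $(s_\p)_{ji}\neq 0$; I would then replay the extremal construction from the second half of the proof of Theorem~\ref{tho:1} restricted to the non-sink block by taking $A=re_je_i^T$, $B=\epsilon I$, $D=I$, and $x_0, w_0$ supported on the non-sink indices as in (\ref{eqn_x_0})--(\ref{eqn_w_0}). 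Since $A_{21}=0$ and the sink initial conditions vanish under this choice, both $\Gamma^\Theta$ and $K^*(P)$ yield zero sink contribution, the ratio reduces to the one computed in (\ref{eqn_proof_2}), and it tends to $\rho$ as $r\to\infty$.

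For the degenerate case $(S_\p)_{11}=0$ and $(S_\p)_{22}=0$, every diagonal entry of $A$ vanishes and the only potentially non-zero block of $A$ is $A_{21}$. Non-sink subsystems are then fully isolated with $a_{ii}=0$, and the scalar Riccati calculation behind Subsection~\ref{subsec_OCCDS} reduces to show that the deadbeat coincides with the centralized optimum for such a subsystem. For each sink $i$, substituting $a_{ii}=0$ gives $f_i=-b_{ii}^2/(b_{ii}^2+1)$, and the sink row of (\ref{eqn:11}) reduces to the per-sink centralized optimum that handles the transient driven by $A_{21}$ through the rapidly decaying non-sink states. Consequently $\Gamma^\Theta$ realises the centralized optimum for every $P\in\p$, so $J_P(\Gamma^\Theta)=J_P(K^*(P))$ and $r_\p(\Gamma^\Theta)=1$.

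The hard part will be the row-by-row identification of the sink rows of (\ref{eqn:11}) with the per-sink centralized optimum, which is needed both for the upper bound in the generic case and for the equality in the degenerate case. The formula for $\Gamma^\Theta$ folds the coupling $A_i$ into both $B_\Gamma^\Theta$ and $D_\Gamma^\Theta$ through $F(A,B)$, and matching the resulting transfer function against the $G_1, G_2$ produced by the Riccati derivation of Subsection~\ref{subsec_OCCDS} requires careful algebra; once that identification is in place, the decomposition of costs into non-sink and sink blocks and the extremal construction proceed essentially along the lines sketched above.
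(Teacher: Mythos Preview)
Your plan for the generic case $(S_\p)_{11}$ not diagonal is exactly the paper's: the upper bound comes from $J_P(\Gamma^\Theta)\leq J_P(\Gamma^\Delta)$ (which the paper obtains by citing the ``only if'' part of Theorem~\ref{tho:3}, i.e., precisely the per-sink optimality you describe) combined with Theorem~\ref{tho:1}; the lower bound comes from the same extremal family $A(r)=re_je_i^T$ with $i,j\leq n-c$, where $\Gamma^\Theta$ acts as deadbeat and the ratio from (\ref{eqn_proof_2}) is recovered.

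For the degenerate case your conclusion ($\Gamma^\Theta=K^*$) is correct, but the per-subsystem justification you sketch has a gap. You write that ``non-sink subsystems are then fully isolated'' and appeal to a scalar Riccati computation for each of them; however, non-sink states still enter the sink dynamics through $A_{21}$, so the non-sink control influences the sink portion of the cost. A scalar Riccati computation for an isolated non-sink subsystem therefore does \emph{not}, by itself, identify the non-sink rows of the joint centralized optimum $K^*_C$. The paper avoids this difficulty entirely: since $(S_\p)_{11}=0$ and $(S_\p)_{22}=0$ force $A$ to be nilpotent of degree two, Lemma~\ref{prop:2} gives $K^*(P)=K^*_C(P)$, and the explicit closed-form for $K^*_C$ obtained in the proof of that lemma (namely $D_K=-(I+B^2)^{-1}BA-B^{-1}D$, $B_K=D(I+B^2)^{-1}B^{-1}A-B^{-1}D^2$) can be compared row by row with (\ref{eqn:11}) after substituting $a_{ii}=0$, which indeed yields $f_i=-b_{ii}^2/(1+b_{ii}^2)$ and hence $F(A,B)A=-(I+B^2)^{-1}B^2A$. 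This global nilpotent route is both shorter and sidesteps the coupling issue in your argument; I would recommend replacing your per-subsystem reasoning in the degenerate case with it.
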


\begin{proof} Based on Theorem~\ref{tho:1}, we know that
\begin{equation} \label{eqn:15}
\hspace{-0.01in} J_{(A,B,D,x_0,w_0)}(K^*(P)) \geq \frac{2\epsilon^2}{2\epsilon^2+1+\sqrt{4\epsilon^2+1}} J_{(A,B,D,x_0,w_0)}(\Gamma^\Delta(A,B,D)),
\end{equation}
and by the proof of the ``only if'' part of Theorem~\ref{tho:3}, we know that
\begin{equation} \label{eqn:16}
J_{(A,B,D,x_0,w_0)}(\Gamma^\Delta(A,B,D)) \geq J_{(A,B,D,x_0,w_0)}(\Gamma^\Theta(A,B,D)),
\end{equation}
for all $x_0\in\mathbb{R}^n$ and $w_0\in\mathbb{R}^n$. Putting~(\ref{eqn:16}) into~(\ref{eqn:15}) results in
$$
J_{(A,B,D,x_0,w_0)}(K^*(P)) \geq \frac{2\epsilon^2}{2\epsilon^2+1+\sqrt{4\epsilon^2+1}} J_{(A,B,D,x_0,w_0)}(\Gamma^\Theta(A,B,D)),
$$
and, therefore, in
$$
\frac{J_{(A,B,D,x_0,w_0)}(\Gamma^\Theta(A,B,D))}{J_{(A,B,D,x_0,w_0)}(K^*(P))} \leq \frac{2\epsilon^2+1+\sqrt{4\epsilon^2+1}}{2\epsilon^2},\; \forall P=(A,B,x_0,w)\in \p.
$$
As a result
$$
r_\p(\Gamma^\Theta)=\sup_{P\in \p} \frac{J_{(A,I,x_0,w)}(\Gamma^\Theta(A,B,D))}{J_{(A,I,x_0,w)}(K_*(P))} \leq \frac{2\epsilon^2+1+\sqrt{4\epsilon^2+1}}{2\epsilon^2}.
$$
If $(S_\p)_{11}$ has an off-diagonal entry, then there exist $1\leq i,j\leq n-c$ and $i\neq j$ such that $(s_\p)_{ij}\neq 0$. Define $A(r)$ such that $A(r)=re_{j}e_{i}^T$. In this case, using the proof of Theorem~\ref{tho:2}, we know
$$
r_\p(\Gamma^\Theta) = \frac{2\epsilon^2+1+\sqrt{4\epsilon^2+1}}{2\epsilon^2},
$$
because the control design $\Gamma^\Theta$ acts as the deadbeat controller on that part of the system. Using both these inequalities proves the statement.
\par If $(S_\p)_{11}=0$ and $(S_\p)_{22}=0$, every matrix $A$ with structure matrix $(S_\p)$ is a nilpotent matrix of degree two. Thus, using Lemma~\ref{prop:2}, we get
$$
J_P(K^*(P))=J_P(K^*_C(P)).
$$
Now, based on the proof of Lemma~\ref{prop:2}, we also know that the optimal controller gain for this plant model is
$$
K_C^*(P)=\left[\begin{array}{c|c} D & D(I+B^2)^{-1}B^{-1}A-B^{-1}D^2 \\ \hline I  & -(I+B^2)^{-1}BA-B^{-1}D \end{array}\right].
$$
For control design strategy $\Gamma^\Theta$, we will have
\begin{eqnarray}
\Gamma^\Theta(A,B,D)&=&\left[\begin{array}{c|c} D & B^{-1}D(B(I+B^2)^{-1}B-I)A-B^{-1}D^2 \\ \hline I  & B^{-1}(B(I+B^2)^{-1}BA-D) \end{array}\right] \nonumber \\ &=&\left[\begin{array}{c|c} D & D(I+B^2)^{-1}B^{-1}A-B^{-1}D^2 \\ \hline I  & -(I+B^2)^{-1}BA-B^{-1}D \end{array}\right] \nonumber
\end{eqnarray}
based on~(\ref{eqn:11}). Thus, $r_\p (\Gamma^\Theta)=1$.
\end{proof}

\begin{theorem} \label{tho:6} Let the plant graph $G_{\p}$ contain at least one sink, the design graph $G_{\mathcal{C}}$ be a totally disconnected graph, and $G_\K\supseteq G_\p$. Then, the competitive ratio of any control design strategy $\Gamma\in\comm$ satisfies
$$
r_{\p}(\Gamma)\geq \frac{2\epsilon^2+1+\sqrt{4\epsilon^2+1}}{2\epsilon^2},
$$
if $(S_{\p})_{11}$ is not diagonal.
\end{theorem}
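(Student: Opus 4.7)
The plan is to mirror the argument of Theorem~\ref{tho:2} almost verbatim, with the only new ingredient being a careful choice of indices that respects the sink structure. The hypothesis that $(S_{\p})_{11}$ is not diagonal is exactly what provides the room needed to make this choice.

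First I would observe that, under the normal form~(\ref{eqn:10}) of $S_\p$, the assumption that $(S_\p)_{11}$ is not diagonal produces indices $i\neq j$ with $1\leq i,j\leq n-c$ and $(s_\p)_{ji}\neq 0$. In particular, the source index $i$ lies in the non-sink block $\{1,\dots,n-c\}$, so $i$ is not a sink of $G_\p$. This is the crucial point: Lemma~\ref{lem:2} requires the selected node $i$ to be a non-sink, and here we are guaranteed such a choice even though the plant graph has sinks elsewhere.

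Next I would reuse the family $A(r)=re_je_i^T$ together with $B=\epsilon I$ and $D=I$, which belong to $\A(S_\p)\times\B(\epsilon)\times\D$ by construction. Because $i\neq j$, $A(r)^2=0$, so $A(r)$ is nilpotent of degree two, and Lemma~\ref{prop:2} applies to give $J_P(K^*(P))=J_P(K^*_C(P))$; the centralized cost is therefore exactly the expression already computed in~(\ref{eqn:KP*}) for the same $A,B,D$. Now, since $\Gamma\in\comm$ and $G_\comm$ is totally disconnected and $i$ is a non-sink, Lemma~\ref{lem:2} forces $a_{ji}+b_{jj}(d_\Gamma)_{ji}(A,B,D)=0$, i.e.\ $\gamma_{ji}(A,B,D)=-r/\epsilon$, for otherwise $r_\p(\Gamma)=\infty$ and the theorem's bound holds trivially.

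With this constraint pinned down, I would take the same initial conditions $x_0$ and $w_0$ as in~(\ref{eqn_x_0})--(\ref{eqn_w_0}) and lower bound $J_P(\Gamma(A,B,D))$ by just the contribution of subsystem $j$ at times one and after — exactly the calculation carried out in Theorem~\ref{tho:2},
\[
J_P(\Gamma(A,B,D))\geq (u_j(0)+w_j(0))^2+x_j(1)^2=\frac{(\epsilon^2+1)(3\epsilon^2\sqrt{4\epsilon^2+1}+5\epsilon^2+4\epsilon^4+\sqrt{4\epsilon^2+1}+1)}{2\epsilon^4},
\]
since the disconnected design graph and the chosen value of $\gamma_{ji}$ make the evolution of subsystem $j$ identical to that in the proof of Theorem~\ref{tho:2}. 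Dividing by~(\ref{eqn:KP*}) and sending $r\to\infty$ yields
\[
r_\p(\Gamma)\geq \lim_{r\to\infty}\frac{J_P(\Gamma(A,B,D))}{J_P(K^*(P))}=\frac{2\epsilon^2+1+\sqrt{4\epsilon^2+1}}{2\epsilon^2}.
\]

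I do not expect a real obstacle here beyond the indexing argument in the first paragraph; the algebraic core is already done inside the proofs of Theorem~\ref{tho:2} and Lemma~\ref{lem:2}. The one point to be careful about is verifying that the perturbation $A(r)=re_je_i^T$ lives in $\A(S_\p)$ — this is exactly where the non-diagonality of $(S_\p)_{11}$ matters, and it is also why the theorem's conclusion can fail when $(S_\p)_{11}$ is diagonal, since in that case the only available construction would place the nonzero entry of $A$ either on the diagonal or inside the sink block, and neither supports the argument of Lemma~\ref{lem:2}.
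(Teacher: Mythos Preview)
Your proposal is correct and follows exactly the paper's approach: pick two distinct indices from an off-diagonal entry of $(S_\p)_{11}$, invoke Lemma~\ref{lem:2} to pin down $(d_\Gamma)_{ji}$, and then rerun the computation of Theorem~\ref{tho:2}. One small slip: to force $a_{ji}+b_{jj}(d_\Gamma)_{ji}=0$ via Lemma~\ref{lem:2} you need the \emph{row} index $j$ to be a non-sink (not $i$), but since you already have $1\leq i,j\leq n-c$ this is harmless.
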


\begin{proof} First, suppose that $(S_\p)_{11}\neq 0$ and $(S_\p)_{11}$ is not a diagonal matrix, then there exist $1\leq i,j \leq n-c$ and $i\neq j$ such that $(s_\p)_{ij}\neq 0$. Consider the family of matrices $A(r)$ defined by $A(r)=re_{i}e_{j}^T$. Based on Lemma~\ref{lem:2}, if we want to have a bounded competitive ratio, the control design strategy should satisfy $r+b_{ii}(d_\Gamma)_{ij}(A(r),B,D)=0$ (because node $1\leq i\leq n-c$ is not a sink). The rest of the proof is similar to the proof of Theorem~\ref{tho:2}.
\end{proof}

\begin{remark} Combining Theorem~\ref{tho:5} and Theorem~\ref{tho:6} implies that if $(S_{\p})_{11}\neq 0$ is not diagonal (i.e., the nodes that are not sink can affect each other), control design method $\Gamma^\Theta$ is a minimizer of the competitive ratio over the set of limited model information control methods and consequently a solution to the problem (\ref{eqn:0}). Furthermore, if $(S_{\p})_{11}$ and $(S_{\p})_{22}$ are both zero, then the $\Gamma^\Theta$ becomes equal to $K^*$, which shows that, $\Gamma^\Theta$ is a solution to the problem (\ref{eqn:0}), in this case too. The rest of the cases are still open here.  \end{remark}

The next theorem shows that $\Gamma^\Theta$ is a more desirable control design method than the deadbeat when plant graph $G_{\p}$ has sinks, since it is then undominated by limited model information design methods for plants in $\p$.

\begin{theorem} \label{tho:4} Let the plant graph $G_{\p}$ contain at least one sink, the design graph $G_{\mathcal{C}}$ be a totally disconnected graph, and $G_\K \supseteq G_\p$. Then, the control design method $\Gamma^\Theta$ is undominated by all limited model information control design methods. \end{theorem}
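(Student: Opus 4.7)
The plan is to prove Theorem~\ref{tho:4} by contrapositive: given any $\Gamma\in\comm$ with $\Gamma\neq\Gamma^\Theta$, I exhibit a plant $P\in\p$ for which $J_P(\Gamma(A,B,D))>J_P(\Gamma^\Theta(A,B,D))$. Since $G_\comm$ is totally disconnected, each row $\Gamma_i$ depends only on $(A_i,b_{ii},d_{ii})$; by construction $\Gamma^\Theta_i=\Gamma^\Delta_i$ for every non-sink index $i\le n-c$ and, for every sink index $i\ge n-c+1$, $\Gamma^\Theta_i$ is the unique LQ-optimal isolated-subsystem controller derived in Section~\ref{subsec_OCCDS}. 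The argument splits into two cases according to where $\Gamma$ differs from $\Gamma^\Theta$.

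\emph{Case 1 (non-sink deviation).} Assume $\Gamma_j(\bar A,B,D)\neq\Gamma^\Theta_j(\bar A,B,D)=\Gamma^\Delta_j(\bar A,B,D)$ for some non-sink $j\le n-c$ and some $(\bar A,B,D)$. Because $j$ is not a sink there exists $i\neq j$ with $(s_\p)_{ij}=1$, and I replay the sufficiency argument of Theorem~\ref{tho:3} on the scaling plant $A_i=re_j^T$, $A_j=\bar A_j$, $A_\ell=0$ for $\ell\neq i,j$, together with $B$, $D$, $x_0=0$, $w_0=e_j$. Either the static deadbeat condition at row $j$ is violated, so $x_j(2)=c_0\neq 0$, and Lemma~\ref{prop:1} gives $J_P(\Gamma)\ge r^2b_{jj}^2/b_{ii}^2+r^2c_0^2/(1+b_{ii}^2)+O(1)$; or the static condition is satisfied but $\Gamma_j$ and $\Gamma^\Delta_j$ differ dynamically, and then the two closed loops coincide for $k=0,1,2$ while $\Gamma$ drives $x_j(3)$ off zero, producing strictly larger cost. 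Using the bound $J_P(\Gamma^\Theta)\le J_P(\Gamma^\Delta)$ established in the proof of Theorem~\ref{tho:5} together with Lemma~\ref{lem:delta}, one has $J_P(\Gamma^\Theta)\le r^2b_{jj}^2/b_{ii}^2+O(1)$ on this plant, so the leading $r^2$ coefficients satisfy $J_P(\Gamma)-J_P(\Gamma^\Theta)\ge r^2c_0^2/(1+b_{ii}^2)+O(1)$, which is strictly positive for $r$ large enough. This comparison is unaffected by whether the chosen $i$ is a sink or a non-sink.

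\emph{Case 2 (sink-only deviation).} Suppose $\Gamma$ agrees with $\Gamma^\Theta$ on every non-sink row but $\Gamma_j(\bar A,B,D)\neq\Gamma^\Theta_j(\bar A,B,D)$ for some sink $j\ge n-c+1$. I construct an isolating plant by setting $A_j=\bar A_j$ and $A_\ell=0$ for $\ell\neq j$, keeping $B$ and $D$, taking $x_0=0$, and picking $w_0=e_j$. The block structure~(\ref{eqn:10}) forces $a_{ij}=0$ for $i\neq j$, and both $\Gamma$ and $\Gamma^\Theta$ apply identical deadbeat controllers on non-sink rows, so the non-sink coordinates remain at zero for all $k$ and the closed loop collapses to the scalar problem $x_j(k+1)=a_{jj}x_j(k)+b_{jj}(u_j(k)+w_j(k))$, $w_j(k+1)=d_{jj}w_j(k)$, controlled by the $(j,j)$-slice of $\Gamma_j$ or $\Gamma^\Theta_j$. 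Since this scalar disturbance-accommodation problem admits a unique LQ-optimal controller, and $\Gamma^\Theta_j$ restricted to that slice is that controller, any disagreement in the $(j,j)$-entries of $(A_\Gamma,B_\Gamma,C_\Gamma,D_\Gamma)$ yields $J_P(\Gamma)>J_P(\Gamma^\Theta)$ for a suitable $w_0$.

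The main obstacle is the subcase of Case~2 in which the $(j,j)$-entries of $\Gamma_j$ and $\Gamma^\Theta_j$ already coincide and the disagreement lies only in off-diagonal entries $(B_\Gamma)_{jk},(D_\Gamma)_{jk}$ with $k\neq j$; these are invisible on the isolating plant above. I would resolve this by enriching the construction with a small initial condition $x_0=x_{0,k}e_k$ on a non-sink $k$ satisfying $(s_\p)_{jk}=1$, so that $x_k(\cdot)$ is nonzero during the two-step deadbeat transient. The differing off-diagonal row of $\Gamma_j$ then generates a $u_j$ distinct from the one produced by $\Gamma^\Theta_j$, which propagates into a different $x_j(2)$. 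Since $\Gamma^\Theta_j$'s off-diagonal entries were selected in the proof of Theorem~\ref{tho:3} precisely to optimize this propagated tail cost, any deviation strictly enlarges the cost accumulated from time~$2$ onward, completing the contradiction.
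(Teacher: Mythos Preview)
Your decomposition matches the paper's (the paper treats the sink case first, as $\mathcal{W}_2$, and then the non-sink static and dynamic cases as $\mathcal{W}_1,\mathcal{W}_0$), but Case~1 has a quantitative gap when the chosen neighbour $i$ happens to be a sink. The lower bound $J_P(\Gamma)\ge r^2b_{jj}^2/b_{ii}^2+r^2c_0^2/(1+b_{ii}^2)+O(1)$ is lifted from the sufficiency proof of Theorem~\ref{tho:3}, where the term $r^2b_{jj}^2/b_{ii}^2$ equals $(u_i(1))^2$ under the assumption $(d_\Gamma)_{ij}=-r/b_{ii}$; that assumption is justified there via the argument of Lemma~\ref{lem:2}, which requires $i$ not to be a sink. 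If $i$ is a sink, nothing forces $(d_\Gamma)_{ij}=-r/b_{ii}$, and applying Lemma~\ref{prop:1} to the pair $(u_i(1)+w_i(1),\,x_i(2))$ only yields $r^2b_{jj}^2/(1+b_{ii}^2)$. Against your upper bound $J_P(\Gamma^\Theta)\le J_P(\Gamma^\Delta)=r^2b_{jj}^2/b_{ii}^2+O(1)$ the leading $r^2$-coefficient of the difference becomes $(b_{jj}^2+c_0^2)/(1+b_{ii}^2)-b_{jj}^2/b_{ii}^2$, which is negative for small $c_0$, so the argument does not close. The paper repairs this by \emph{not} bounding $J_P(\Gamma^\Theta)$ via $\Gamma^\Delta$: on the test plant (with $d_{\ell\ell}=0$ for $\ell\neq j$) the sink row $i$ of $\Gamma^\Theta$ is LQ-optimal, giving $J_P(\Gamma^\Theta)=r^2b_{jj}^2/(1+b_{ii}^2)+\kappa(A_j,b_{jj})$ with $\kappa=O(1)$, and the matched denominators make the comparison $r^2c_0^2/(1+b_{ii}^2)-\kappa>0$ for large $r$.

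Your Case~2 and its off-diagonal ``obstacle'' are handled more economically in the paper: on the isolating plant $\Gamma^\Theta$ is the unique globally optimal controller for \emph{every} $(x_0,w_0)$ (this is precisely what the ``only if'' part of Theorem~\ref{tho:3} establishes), so any $\Gamma$ with $\Gamma_j\neq\Gamma^\Theta_j$ produces a different control sequence for some $(x_0,w_0)$ and hence strictly larger cost, with no need for a separate off-diagonal construction.
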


\begin{proof} Assume that there are $c\geq 1$ sink in the plant graph. For proving this claim, we are going to prove that for any control design method $\Gamma\in \comm \backslash \{\Gamma^\Theta \}$, there exits a plant $P=(A,B,D,x_0,w_0)\in \p$ such that $J_P(\Gamma(A,B,D))>J_P(\Gamma^\Theta(A,B,D))$. We will proceed in several steps, which require us to partition the set of limited model information control design strategies $\comm$ as follows
$$
\comm= \mathcal{W}_2 \cup \mathcal{W}_1 \cup \mathcal{W}_0 \cup \{\Gamma^\Delta\},
$$
where
\begin{eqnarray}
\mathcal{W}_2:=& \{\Gamma\in \comm \;|\; \exists j, n-c+1 \leq j \leq n, \mbox{ such that }  \Gamma_j(A,B,D)\neq \Gamma^\Theta_j(A,B,D) \},\nonumber
\end{eqnarray}
\begin{eqnarray}
&&\mathcal{W}_1:= \{\Gamma\in  \;\comm \setminus \mathcal{W}_2  \;|\; \exists j, 1\leq j\leq n-c,  \nonumber \\ &&\hspace{1.2in} \mbox{ and } \exists P\in \p,(D_\Gamma)_j(A,B,D)\neq (D_\Gamma^\Theta)_j(A,B,D) \},\nonumber
\end{eqnarray}
and
\begin{eqnarray}
 &&\mathcal{W}_0:= \{\Gamma\in \comm \setminus \mathcal{W}_2 \cup \mathcal{W}_1  \;|\; \exists j, 1\leq j\leq n-c, \exists P\in \p, \nonumber \\&&\hspace{1.2in} \mbox{ such that }  \Gamma_j(A,B,D)\neq \Gamma^\Theta_j(A,B,D) \}.\nonumber
\end{eqnarray}
\par First, we prove that the $\Gamma^\Theta$ is undominated by control design strategies in $\mathcal{W}_2$. We assume that there exist index $n-c+1 \leq j \leq n$ and matrices $\bar{A}\in\A(S_\p)$, $B \in \B(\epsilon)$, $\bar{D}\in \D$ such that $\Gamma_j(\bar{A},B,\bar{D})\neq \Gamma^\Theta_j(\bar{A},B,\bar{D})$. Consider matrices $A$ and $D$ defined as $A_j=\bar{A}_j$ and $A_i=0$ for all $i\neq j$ and $d_{jj}=\bar{d}_{jj}$ and $d_{ii}=0$. For this particular matrix $A$, any $x_0$, and any $w_0$, we know from the proof of the ``only if'' part of Theorem~\ref{tho:3} that $\Gamma^\Theta(A,B,D,x_0,w_0)$ is the globally optimal controller with limited model information. Hence, every other control design method in $\comm$ leads to a controller with greater performance cost than $\Gamma^\Theta$ for this particular type of plants. Therefore, the control design $\Gamma^\Theta$ is undominated by control design methods in $\mathcal{W}_2$.

\par Second, we prove that the control design strategy $\Gamma^\Theta$ is undominated by the control design strategies in $\mathcal{W}_1$. Let $\Gamma$ be a control design strategy in $\mathcal{W}_1$ and let index $1\leq j\leq n-c$ be such that $\bar{A}_j+b_{jj}(D_\Gamma)_j(\bar{A},B,\bar{D})+\bar{d}_{jj}e_j^T\neq0$ for some matrices $\bar{A}\in\A(S_\p)$, $B\in\B(\epsilon)$, and $\bar{D}\in\D$. It is always possible to pick an index $i\neq j$ such that $(s_\p)_{ij}\neq 0$ because node $j$ is not a sink in the plant graph. If $1\leq i\leq n-c$, the proof is the same as the proof of the ``if'' part of Theorem~\ref{tho:3}, therefore, without any loss of generality, we assume that $n-c+1\leq i\leq n$. Again, with the same argument as in the proof of the ``if'' part of Theorem~\ref{tho:3}, without loss of generality, we can assume that $a_{jj}+b_{jj}(d_\Gamma)_{jj}(A,B,D)+d_{jj}\neq 0$ (because otherwise the ratio of the cost the control design strategy $\Gamma$ to the cost of the control design strategy $\Gamma^\Theta$ becomes infinity). Define matrix $A$ such that $A_j=\bar{A}_j$, $A_{i}=re_j^T$, and $A_\ell=0$ for all $\ell\neq i,j$. Let $D\in\D$ be such that $d_{jj}=\bar{d}_{jj}$ and $d_{\ell}=0$ for all $\ell\neq j$. It should be noted that $\Gamma_{j}(A,B,D)= \Gamma_{j} (\bar{A},B,\bar{D})$ because the design graph is a totally disconnected graph. The cost of this control design strategy for $w_0=e_j$ and $x_0=0$ would satisfy
\begin{equation}
\begin{split}
J_{P}(\Gamma(A,B,D)) &\geq (u_i(1)+w_i(1))^2+x_i(2)^2+(u_i(2)+w_i(2))^2+x_i(3)^2 \nonumber\\& = r^2b_{jj}^2/(b_{ii}^2+1) + (u_i(2)+w_i(2))^2 \\ & \hspace{1.4in}+ (x_j(2)r+b_{ii}[u_i(2)+w_i(2)])^2\nonumber \\&\geq (r^2b_{jj}^2+x_j(2)^2r^2)/(1+b_{ii}^2),\nonumber
\end{split}
\end{equation}
This results in
\begin{eqnarray}
&&J_{(A,I,B,D,0,e_j)}(\Gamma(A,B,D))-J_{(A,I,B,D,0,e_j)}(\Gamma^\Theta(A,B,D)) \nonumber\\&&\hspace{.6in} \geq  (a_{jj}+b_{jj}(d_\Gamma)_{jj}(A,B,D)+d_{jj})^2b_{jj}^2r^2/(1+b_{ii}^2)-\kappa(A_j,b_{jj}).\nonumber
\end{eqnarray}
where $\kappa(A_j,b_{jj})$ is only a function $A_j$ and $b_{jj}$ and represents the part of the cost of the control design strategy $\Gamma^\Theta$ that is related to subsystem $j$ only. If we pick $r$ large enough, the difference would become positive, which shows that the control design strategy $\Gamma$ cannot dominate the control design strategy $\Gamma^\Theta$.

\par Finally, we prove that the control design strategy $\Gamma^\Theta$ is undominated by the control design strategies in $\mathcal{W}_0$. The same argument as in the proof of the ``if'' part of Theorem~\ref{tho:3} holds here too.
\end{proof}

\begin{remark} Consider the limited model information design problem given by the plant graph $G_\p$ in Figure~\ref{figure0a}($a$), the control graph $G'_\K$ in Figure~\ref{figure0a}($b'$), and the design graph $G'_\comm$ in Figure~\ref{figure0a}($c'$). Theorems~\ref{tho:5},~\ref{tho:6}, and~\ref{tho:4} together show that, the control design strategy $\Gamma^\Theta$ is the best control design strategy that one can propose based on local subsystems' model and the plant graph, because the control design strategy $\Gamma^\Theta$ is a minimizer of the competitive ratio and it is undominated.
\end{remark}

\section{Design Graph Influence on Achievable Performance}  \label{sec:Gc}  In the previous section, we approached the optimal control design under limited model information when $G_{\mathcal{C}}$ is a totally disconnected graph. The next step is to determine the necessary amount of the model information needed in each subcontroller to be able to setup a control design strategy with a smaller competitive ratio than the deadbeat control design strategy. We tackle this question here.

\begin{theorem} \label{tho:8} Let the plant graph $G_\p$ and the design graph $G_\comm$ be given, and $G_\K\supseteq G_\p$. Assume that the plant graph $G_\p$ contains the path $i \rightarrow j \rightarrow \ell$ with distinct nodes $i$, $j$, and $\ell$ while $(\ell,j)\notin E_\comm$. Then, we have
$$
r_{\p}(\Gamma)\geq \frac{2\epsilon^2+1+\sqrt{4\epsilon^2+1}}{2\epsilon^2}.
$$
\end{theorem}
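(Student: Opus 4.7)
The plan is to adapt the proof of Theorem~\ref{tho:2}, replacing the use of Lemma~\ref{lem:2} by a variant in which total disconnectedness of the design graph is weakened to the single constraint $(\ell, j) \notin E_\comm$. The only role the disconnectedness played in Lemma~\ref{lem:2} is to ensure that modifying a row of $A$ indexed by a node that the relevant subcontroller does not see leaves that subcontroller's design unchanged; under our hypotheses we have precisely this property for subcontroller $j$ when row $\ell$ is perturbed, while the plant edge $(s_\p)_{\ell j}\neq 0$ (the second segment of the path $i\to j\to\ell$) gives an admissible way to place an arbitrary real number $s$ at position $(\ell,j)$ of $A$. Running Lemma~\ref{lem:2}'s contrapositive argument with this substitution should yield the variant: if $r_\p(\Gamma)$ is finite, then $a_{jk} + b_{jj}(d_\Gamma)_{jk}(A,B,D) = 0$ for all $k \neq j$ and all plants $(A,B,D)$.

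With this variant in hand, I would follow the proof of Theorem~\ref{tho:2} verbatim. Use the first segment $(s_\p)_{ji}\neq 0$ of the path (with $i\neq j$, since the three nodes are distinct) to define $A(r) = r e_j e_i^T$, $B=\epsilon I$, $D=I$. Assume $r_\p(\Gamma)<\infty$, else the inequality is trivial. The variant applied with $k = i$ forces $(d_\Gamma)_{ji}(A(r),B,D) = -r/\epsilon$. Choosing $x_0, w_0$ as in~(\ref{eqn_x_0})--(\ref{eqn_w_0}), the same computation used in the proof of Theorem~\ref{tho:2} yields a lower bound on $J_P(\Gamma(A,B,D))$ independent of $r$; meanwhile $A(r)^2 = 0$, so Lemma~\ref{prop:2} together with~(\ref{eqn:KP*}) evaluates $J_P(K^*(P))$, which tends to a finite limit as $r\to\infty$. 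Passing to the limit gives the claimed bound.

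The main technical step is the variant of Lemma~\ref{lem:2}. Assuming by contrapositive some $(\bar A, B, \bar D)$ and $k \neq j$ with $\bar a_{jk} + b_{jj}(d_\Gamma)_{jk}(\bar A, B, \bar D) \neq 0$, I would build $A$ by setting $A_m = \bar A_m$ for every $m \neq \ell$ and $A_\ell = s e_j^T$. The design-graph restriction gives $\Gamma_j(A, B, \bar D) = \Gamma_j(\bar A, B, \bar D)$. With $x_0 = 0$, $w_0 = e_k$, one checks $x_j(2) = b_{kk}[\bar a_{jk} + b_{jj}(d_\Gamma)_{jk}(\bar A, B, \bar D)]$ is nonzero and independent of $s$, while $x_\ell(3) = s x_j(2) + b_{\ell\ell}(u_\ell(2) + w_\ell(2))$. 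Lemma~\ref{prop:1} then lower-bounds $J_P(\Gamma(A,B,\bar D))$ by $s^2 x_j(2)^2/(1 + b_{\ell\ell}^2)$, which diverges as $s\to\infty$. The subtle point is verifying that $J_P(\Gamma^\Delta)$ remains uniformly bounded in $s$; this follows because row $\ell$ of $A$ differs from $\bar A_\ell$ only in column $j$, so the $k$-th column of $A$ (with $k\neq j$) that enters the deadbeat cost formula is unchanged. Together with $J_P(K^*(P))\leq J_P(\Gamma^\Delta)$, this forces $r_\p(\Gamma) = \infty$, completing the contrapositive.
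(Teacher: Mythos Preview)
Your proposal is correct and follows essentially the same route as the paper. The paper works directly with the two-parameter family $A(r,s)=re_je_i^T+se_\ell e_j^T$ (letting $s\to\infty$ for fixed $r$ to force $(d_\Gamma)_{ji}=-r/\epsilon$, then setting $s=0$), whereas you first abstract this step into a variant of Lemma~\ref{lem:2} and then specialize to $k=i$ and $A(r)=re_je_i^T$; the underlying mechanism---using the edge $j\to\ell$ together with $(\ell,j)\notin E_\comm$ so that perturbing row $\ell$ leaves $\Gamma_j$ unchanged---is identical.
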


\begin{proof} Let $i$, $j$, and $k$ be three distinct nodes such that $(s_{\mathcal{P}})_{ji}\neq 0$ and $(s_{\mathcal{P}})_{\ell i}\neq 0$ (i.e., the path $i \rightarrow j \rightarrow \ell$ is contained in the plant graph $G_{\mathcal{P}}$). Define the 2-parameter family of matrices $A(r,s)=re_je_i^T+se_\ell e_j^T$. Let $B=\epsilon I$, $D=I$, and $\Gamma\in \comm$ be a limited model information with design graph $G_{\mathcal{C}}$. The cost of this control design strategy for $w_0=e_i$ and $x_0=0$ satisfies
\begin{eqnarray}
J_{(A,B,D,0,e_j)}(\Gamma(A,B,D))&\geq& \left(u_\ell(2)+w_\ell(2)\right)^2+x_\ell(3)^2 \nonumber \\ &=& \left(u_\ell(2)+w_\ell(2)\right)^2+\left(sx_j(2)+\epsilon[u_\ell(2)+w_\ell(2)]\right)^2,\nonumber
\end{eqnarray}
because of the structure of the cost function in~(\ref{cost_easy}) and the system dynamic in~(\ref{eqn_1}). Now, using Lemma~\ref{prop:1} and the fact that $x_j(2)=(r+\epsilon(d_\Gamma)_{ji}(r))\epsilon$ (see~Figure~\ref{figure2}), we get
\begin{eqnarray}
J_{(A,B,D,0,e_j)}(\Gamma(A,B,D))&\geq& s^2x_j(2)^2/(1+\epsilon^2) \nonumber \\ &=& (r+\epsilon(d_\Gamma)_{ji}(r))^2\epsilon^2 s^2/(1+\epsilon^2).\nonumber
\end{eqnarray}
Note that $(d_\Gamma)_{ji}(r)$ is only a function of $r$ and not $s$ since $(\ell,j)\notin E_\comm$. On the other hand, the cost of the deadbeat control design strategy is
\begin{eqnarray}
J_{(A,B,D,0,e_j)}(\Gamma^\Delta(A,B,D))&=&e_i^TB^T(A^TB^{-2}A+B^{-2}+I)Be_i\nonumber \\&=&\epsilon^2+1+r^2.\nonumber
\end{eqnarray}
Note that the deadbeat control design strategy is applicable here since the control graph $G_{\K}$ is a supergraph of the plant graph $G_\p$. Using~(\ref{eqn_7}), we get
\begin{eqnarray} \label{eqn_71}
r_p(\Gamma) &\geq& \frac{(r+\epsilon(d_\Gamma)_{ji}(r))^2\epsilon^2/(1+\epsilon^2) }{\epsilon^2+1+r^2}\lim_{s\rightarrow \infty}s^2.\nonumber
\end{eqnarray}
Using~(\ref{eqn_71}) it is easy to see that the competitive ratio $r_\p(\Gamma)$ is bounded only if $r+\epsilon(d_\Gamma)_{ji}(r)=0$, for all $r\in\mathbb{R}$. Therefore, there is no loss of generality in assuming that $(d_\Gamma)_{ji}(r)=-r/\epsilon$ because otherwise the $r_\p(\Gamma)$ is infinity and the inequality in the statement of the theorem is trivially satisfied. Now, let us fix $s=0$ and use the notation $A(r)=re_je_i^T$. Since the parameters of the subsystem $j$ is not changed and $(\ell,j)\notin E_\comm$, we have $(d_\Gamma)_{ji}(r)=-r/\epsilon$. Therefore, for each $r\in \mathbb{R}$, similar to the proof of Theorem~\ref{tho:2}, the cost of the control design strategy $\Gamma$ for $x_0$ in~(\ref{eqn_x_0}) and $w_0$ in~(\ref{eqn_w_0}) is lower-bounded by
\begin{eqnarray}
J_P(\Gamma(A,B,D)) \geq \frac{(\epsilon^2 + 1)(3\epsilon^2\sqrt{4\epsilon^2 + 1} + 5\epsilon^2 + 4\epsilon^4 + \sqrt{4\epsilon^2 + 1} + 1)}{2\epsilon^4},\nonumber
\end{eqnarray}
On the other hand, for each $r\in \mathbb{R}$, the matrix $A(r)$ is a nilpotent matrix of degree two, that is, $A(r)^2=0$. Similar to the proof of Theorem~\ref{tho:2}, for $x_0$ in~(\ref{eqn_x_0}) and $w_0$ in~(\ref{eqn_w_0}), we get
\begin{small}
\begin{equation}
J_P(K^*(P))= \frac{(\epsilon^2+1)\sqrt{4\epsilon^2 + 1} + 5\epsilon^2 + 4\epsilon^4 + 1}{2\epsilon^2}+ \frac{(2\epsilon^2 + \sqrt{4\epsilon^2 + 1} + 1)\sqrt{4\epsilon^2 + 1}}{2\epsilon^2r^2}, \nonumber
\end{equation}
\end{small}
since $J_P(K^*(P))=J_P(K^*_C(P))$ according to Lemma~\ref{prop:2}. This results in
$$
r_\p(\Gamma)\geq \lim_{r\rightarrow \infty} \frac{J_P(\Gamma(A,B,D))}{J_P(K^*(P))}=\frac{2\epsilon^2+1+\sqrt{4\epsilon^2+1}}{2\epsilon^2}. \nonumber
$$
This finishes the proof. \end{proof}

\begin{remark} Consider the limited model information design problem given by the plant graph $G'_\p$ in Figure~\ref{figure0a}($a'$), the control graph $G_\K$ in Figure~\ref{figure0a}($b$), and the design graph $G_\comm$ in Figure~\ref{figure0a}($c$). Theorem~\ref{tho:8} shows that, because the plant graph $G_\mathcal{P}$ contains the path $2\rightarrow1\rightarrow4$ but the design graph $G_\mathcal{C}$ does not contain $4\rightarrow 1$, the competitive ratio of any control design strategy $\Gamma\in\comm$ would be greater than or equal to $r_\p(\Gamma^\Delta)$.
\end{remark}

\begin{remark} Theorem~\ref{tho:8} shows that, when $G_{\mathcal{P}}$ and $G_\K$ is a complete graph, achieving a better competitive ratio than the deadbeat design strategy requires each subsystem to have full knowledge of the plant model when constructing each subcontroller. \end{remark}

\section{Proportional-Integral Deadbeat Control Design Strategy} \label{sec_3.5}
In this section, we use some of the results of the paper on familiar control design problems like constant-disturbance rejection and step reference-tracking.

\subsection{Constant-Disturbance Rejection}
For the case of constant-disturbance rejection, we can model the disturbance as in~(\ref{eqn_2}) with matrix $D=I$. For each plant $P=(A,B,I,x_0,w_0)\in \p$, the deadbeat controller design strategy is
$$
\Gamma^\Delta(A,B,I)\triangleq\left[\begin{array}{c|c} I & -B^{-1} \\ \hline I & -B^{-1}(A+I) \end{array}\right],
$$
This controller can be realized as
$$
u(k)=-B^{-1}Ax(k)-B^{-1}\sum_{i=0}^{k} x(i).
$$
which is a proportional-integral controller. Thus, we call the restricted mapping $\Gamma^\Delta_{\mbox{\scriptsize{const}}}:\A(S_\p)\times \B(\epsilon) \rightarrow \K(S_\K)$, defined as $\Gamma^\Delta_{\mbox{\scriptsize{const}}}(A,B)=\Gamma^\Delta(A,B,I)$, the proportional-integral deadbeat control design strategy. The proportional term regulates the states of the system and the integral term compensates for the disturbance. For instance, in this case, Theorem~\ref{tho:2} shows that when the plant graph $G_\p$ contains no sink and the design graph $G_\comm$ is a totally disconnected graph, the deadbeat proportional-integral control design strategy is an undominated minimizer of the competitive ratio. Note that the integral part of this control design strategy is fully decentralized and the proportional part only needs the neighboring subsystems state-measurements.

\subsection{Step Reference-Tracking}
Consider the case that we are interested in tracking a constant reference signal $r\in\mathbb{R}^{n}$. We need to define the difference $\bar{x}(k)=x(k)-r$ which gives
$$
\bar{x}(k+1)=x(k+1)-r=Ax(k)+Bu(k)-r= A \bar{x}(k)+Bu(k)+Ar-r.
$$
Now if the subsystems do not want to share the reference points with each other, we can think of the additional term $Ar-r$ as the constant-disturbance vector $w(k)=B^{-1}(Ar-r)$. Thus, we have
$$
\bar{x}(k+1)=A \bar{x}(k)+B(u(k)+w(k)).
$$
The subsystems only need to transmit the relative error between the state-measurements and reference points. In this case, we can use the cost function
\begin{equation} \label{eqn:cost:ref}
J_P (K)=\sum_{k=0}^\infty [\bar{x}(k)^T\bar{x}(k)+(u(k)+w(k))^T(u(k)+w(k))],
\end{equation}
to make sure that the error $\bar{x}(k)$ goes to zero as time tends to infinity. Note that if we want to have a complete state regulation $\lim_{k\rightarrow \infty} \bar{x}(k)=0$, the control signal should have a limit as
$$
\lim_{k\rightarrow \infty} u(k)=-B^{-1}(Ar-r).
$$
Thus, the second term of the cost function~(\ref{eqn:cost:ref}) only penalizes the difference of the control signal and its steady-state value.

\section{Conclusions} \label{sec_4}
We studied the design of optimal disturbance-rejection and servomechanism dynamic controllers under limited plant model information. We investigated the relationship between closed-loop performance and the control design strategies with limited model information using the performance metric called competitive ratio. We found an explicit minimizer of the competitive ratio and showed that this minimizer is also undominated. This optimal control design is a dynamic control design strategy composed of a static part for regulating the state of the system and a dynamic part for canceling the effect of the disturbance. Possible future work will focus on extending the present framework to situations where the subsystems and disturbances are not scalar.

\bibliography{ref}

\begin{thebibliography}{10}

\bibitem{Wang1973}
S.-H. Wang and E.~Davison, ``On the stabilization of decentralized control
  systems,'' {\em Automatic Control, IEEE Transactions on}, vol.~18, pp.~473 --
  478, oct 1973.

\bibitem{Ozguler1990}
A.~B. \"{O}zg\"{u}ler, ``Decentralized control: a stable proper fractional
  approach,'' {\em Automatic Control, IEEE Transactions on}, vol.~35, no.~10,
  pp.~1109 --1117, 1990.

\bibitem{Sandell1978}
J.~Sandell, N., P.~Varaiya, M.~Athans, and M.~Safonov, ``Survey of
  decentralized control methods for large scale systems,'' {\em Automatic
  Control, IEEE Transactions on}, vol.~23, no.~2, pp.~108 -- 128, 1978.

\bibitem{Mahajan2009123}
A.~Mahajan and D.~Teneketzis, ``Optimal performance of networked control
  systems with nonclassical information structures,'' {\em SIAM Journal on
  Control and Optimization}, vol.~48, no.~3, pp.~1377--1404, 2009.

\bibitem{Witsenhausen1968}
H.~S. Witsenhausen, ``A counterexample in stochastic optimum control,'' {\em
  SIAM Journal on Control}, vol.~6, no.~1, pp.~131--147, 1968.

\bibitem{Papadimitriou1986}
C.~H. Papadimitriou and J.~N. Tsitsiklis, ``Intractable problems in control
  theory,'' {\em SIAM Journal on Control and Optimization}, vol.~24, no.~1,
  pp.~639--654, 1986.

\bibitem{Rotkowitz2006}
M.~Rotkowitz and S.~Lall, ``A characterization of convex problems in
  decentralized control,'' {\em Automatic Control, IEEE Transactions on},
  vol.~51, no.~2, pp.~274 -- 286, 2006.

\bibitem{Swigart2010}
J.~Swigart and S.~Lall, ``Optimal synthesis and explicit state-space solution
  for a decentralized two-player linear-quadratic regulator,'' in {\em Decision
  and Control, Proceedings of the 49th IEEE Conference on}, pp.~132 --137,
  2010.

\bibitem{Shah2010}
P.~Shah and P.~Parrilo, ``$\mbox{H}_2$-optimal decentralized control over
  posets: A state space solution for state-feedback,'' in {\em Decision and
  Control, Proceedings of the 49th IEEE Conference on}, pp.~6722 --6727, 2010.

\bibitem{Voulgaris200351}
P.~G. Voulgaris, ``Optimal control of systems with delayed observation sharing
  patterns via input-output methods,'' {\em Systems \& Control Letters},
  vol.~50, no.~1, pp.~51 -- 64, 2003.

\bibitem{Dunbar2007}
W.~Dunbar, ``Distributed receding horizon control of dynamically coupled
  nonlinear systems,'' {\em Automatic Control, IEEE Transactions on}, vol.~52,
  no.~7, pp.~1249 --1263, 2007.

\bibitem{Negenborn2010}
R.~R. Negenborn, Z.~Lukszo, and H.~Hellendoorn, eds., {\em Intelligent
  Infrastructures}, vol.~42.
\newblock Springer, 2010.

\bibitem{Langbort2010}
C.~Langbort and J.-C. Delvenne, ``Distributed design methods for linear
  quadratic control and their limitations,'' {\em Automatic Control, IEEE
  Transactions on}, vol.~55, no.~9, pp.~2085 --2093, 2010.

\bibitem{Farokhi_ACC_2011}
F.~Farokhi, C.~Langbort, and K.~H. Johansson, ``Control design with limited
  model information,'' in {\em American Control Conference, Proceedings of
  the}, pp.~4697 --4704, 2011.

\bibitem{FarokhiLangbortJohansson2011}
F.~Farokhi, C.~Langbort, and K.~H. Johansson, ``Optimal control design with
  limited model information,'' 2011.
\newblock Submitted. Preprint: {\tt arXiv:1112.3839v1 [math.OC]}
  \url{http://arxiv.org/abs/1112.3839}.

\bibitem{Farokhi2011}
F.~Farokhi and K.~H. Johansson, ``Dynamic control design based on limited model
  information,'' in {\em Communication, Control, and Computing, Proceedings of
  the 49th Annual Allerton Conference on}, 2011.

\bibitem{Smith1972}
H.~Smith and E.~Davison, ``Design of industrial regulators. integral feedback
  and feedforward control,'' {\em Electrical Engineers, Proceedings of the
  Institution of}, vol.~119, no.~8, pp.~1210 --1216, 1972.

\bibitem{Davison1971}
E.~Davison and H.~Smith, ``Pole assignment in linear time-invariant
  multivariable systems with constant disturbances,'' {\em Automatica}, vol.~7,
  no.~4, pp.~489 -- 498, 1971.

\bibitem{Johnson1968}
C.~Johnson, ``Optimal control of the linear regulator with constant
  disturbances,'' {\em Automatic Control, IEEE Transactions on}, vol.~13,
  no.~4, pp.~416 -- 421, 1968.

\bibitem{Young1972}
P.~C. Young and J.~C. Willems, ``An approach to the linear multivariable
  servomechanism problem,'' {\em International Journal of Control}, vol.~15,
  no.~5, pp.~961 -- 979, 1972.

\bibitem{Anderson1971}
B.~D.~O. Anderson and J.~B. Moore, {\em Linear Optimal Control}.
\newblock Prentice-Hall, Inc., 1971.

\bibitem{Komaroff1994}
N.~Komaroff, ``Iterative matrix bounds and computational solutions to the
  discrete algebraic \textsc{R}iccati equation,'' {\em Automatic Control, IEEE
  Transactions on}, vol.~39, no.~8, pp.~1676 --1678, 1994.

\bibitem{Kondo1986}
R.~Kondo and K.~Furuta, ``On the bilinear transformation of \textsc{R}iccati
  equations,'' {\em Automatic Control, IEEE Transactions on}, vol.~31, no.~1,
  pp.~50 -- 54, 1986.

\bibitem{Zhang2005}
F.~Zhang, {\em The Schur Complement and Its Applications}.
\newblock Springer, 2005.

\end{thebibliography}
\bibliographystyle{ieeetr}

\end{document}